\def\@citecolor{blue}
\def\@urlcolor{blue}
\def\@linkcolor{blue}
\def\theequation{\thesection.\@arabic \c@equation}
\def\@citecolor{blue}
\def\@urlcolor{blue}
\def\@linkcolor{blue}
\def\theenumi{\@roman\c@enumi}
\theoremstyle{plain}
\newtheorem{theorem}[equation]{Theorem}
\newtheorem{lemma}[equation]{Lemma}
\newtheorem{corollary}[equation]{Corollary}
\newtheorem{proposition}[equation]{Proposition}
\theoremstyle{definition}
\newtheorem{remark}[equation]{Remark}
\newtheorem{remarks}[equation]{Remarks}
\newtheorem{example}[equation]{Example}
\newtheorem{definition}[equation]{Definition}
\def\NZQ{\mathbb}               
\def\NN{{\NZQ N}}
\def\frk{\mathfrak}               
\def\mm{{\frk m}}
\def\qq{{\frk q}}
\def\pp{{\frk p}}
\def\opn#1#2{\def#1{\operatorname{#2}}} 
\opn\chara{char}
\opn\length{\ell}
\opn\projdim{proj\,dim}
\opn\depth{depth}
\opn\reg{reg}
\opn\lreg{lreg}
\opn\sat{^{sat}}
\opn\lex{^{lex}}
\opn\Ker{Ker}
\opn\Coker{Coker}
\opn\Im{Im}
\opn\Hom{Hom}
\opn\Tor{Tor}
\opn\Ext{Ext}
\opn\End{End}
\opn\Aut{Aut}
\opn\id{id}
\opn\GL{GL}
\let\lra\longrightarrow
\let\Ra\Rightarrow
\opn\Gin{Gin}
\opn\BW{BW}
\opn\Hilb{Hilb}
\opn\ini{in}
\opn\End{end}
\begin{document}
\title{A rigidity property of local cohomology modules}

\author{Enrico Sbarra}
\address{Enrico Sbarra - Dipartimento di Matematica - Universit\`a degli Studi di Pisa - Largo Bruno Pontecorvo 5 - 56127 Pisa - Italy}
\email{sbarra@dm.unipi.it}
\author{Francesco Strazzanti}
\address{Francesco Strazzanti - Dipartimento di Matematica - Universit\`a degli Studi di Pisa - Largo Bruno Pontecorvo 5 - 56127 Pisa - Italy}
\thanks{The first author was partially supported by PRA project 2015-16 ``Geometria, Algebra e
Combinatoria di Spazi di Moduli e Configurazioni'', University of Pisa.
  The work of the second author was supported by the ``National Group for Algebraic and Geometric Structures, and their Applications'' (GNSAGA-INDAM)}
\email{strazzanti@mail.dm.unipi.it}
\subjclass[2010]{Primary 13D45; 13A02; Secondary 13C13.}

\begin{abstract}
The relationships between the invariants and the homological properties of $I$, $\Gin(I)$ and $I\lex$  have been studied extensively over the past decades. A result of A. Conca, J. Herzog and T. Hibi points out some rigid behaviours of their Betti numbers. In this work we establish a local cohomology counterpart of their theorem. To this end, we make use of properties of sequentially Cohen-Macaulay modules and we study a generalization of such concept by introducing what we call partially sequentially Cohen-Macaulay modules, which might  be of interest by themselves.
\end{abstract}

\keywords{Hilbert functions. Lexicographic ideals. Generic initial ideals. Consecutive cancellations. Partially sequentially Cohen-Macaulay modules. Local Cohomology. Bj\"orner-Wachs polynomial.}
\date{\today}

\maketitle

\section*{Introduction}
Let $I$ be a homogeneous ideal of $R=K[X_1, \dots,X_n]$ over a field $K$. Several results are known about the Betti numbers of $I$, in connection with its generic initial ideals and its lex-ideal. By \cite{Bi}, \cite{Hu} and \cite{Pa}, the maximal graded Betti numbers of a given Hilbert function are achieved by the unique lex-ideal with such Hilbert function; in particular this means that the Betti numbers of $I$ are always less than or equal to those of its lex-ideal $I\lex$. It is also well-known that the Betti numbers of $\ini_{\prec}(I)$ are greater than or equal to those of $I$, for any monomial order $\prec$. In particular, if one considers the generic initial ideal of $I$ with respect to reverse lexicographic order $\Gin(I)$,  $I$ and $\Gin(I)$ have the same Betti numbers if and only if $I$ is componentwise linear; this is proved in characteristic zero by A. Aramova, J. Herzog and T. Hibi in \cite{ArHeHi} and generalized in \cite{CaSb} to any characteristic.

We recall that the Betti numbers $\beta_{ij}(R/I)$ of $R/I$ are defined as the dimensions of the vector spaces of $\Tor_i^R(R/I,K)_j$. Thus, it is natural to ask whether similar results hold for $\dim_K \Ext_R^i(R/I,R)_j$ or, equivalently via local duality, for $h^k(R/I)_j:=\dim_K H^k_{\mm}(R/I)_j$, i.e. for the Hilbert function of the local cohomology modules with support on the maximal graded ideal of $R$. A first step in this direction has been made in \cite{Sb1}, where the first author proves that $h^k(R/I)_j \leq h^k(R/\ini_{\prec}(I))_j \leq h^k(R/I\lex)_j$ for any monomial order $\prec$ and for all $k$ and $j$. Moreover, in \cite{HeSb}, it is proven that  $h^k(R/I)_j = h^k(R/\Gin(I))_j$ for all $j$ and $k$ if and only if $R/I$ is a sequentially Cohen-Macaulay module; this notion was introduced independently by Schenzel in \cite{Sc} and Stanley in \cite{St} and has been widely studied, especially in Combinatorial Commutative Algebra  because of its connection with non-pure shellability. In this paper we introduce the notion of {\it partially sequentially Cohen-Macaulay} modules in order to characterize the ideals for which $h^k(R/I)_j = h^k(R/\Gin(I))_j$ for all $k \geq i$ and all $j$, see Section $3$ for more details.

Moreover, in \cite{HeHi}, J. Herzog and T. Hibi prove that $I$ and its lex-ideal have the same Betti numbers if and only if $\beta_{0j}(I)=\beta_{0j}(I\lex)$ for all $j$ - these ideals are the so-called {\it Gotzmann ideals} - whereas, in \cite{Sb}, it is shown  that the local cohomology modules of $I$ and $I\lex$ have the same Hilbert functions if and only if $h^0(R/I)_j=h^0(R/I\lex)_j$ for all $j$, and these ideals have been characterized as those such that $(I\sat)\lex=(I\lex)\sat$. We prove, in Theorem \ref{rigidit}, that this property is equivalent to $\Gin(I)\sat=(I\lex)\sat$ and in Proposition \ref{BW} that such ideals are exactly those for which $I$ and $I\lex$ have the same Bj\"orner-Wachs polynomial, a tool recently introduced by A. Goodarzi in \cite{Go} in order to characterize sequentially Cohen-Macaulay modules. 

Finally, in \cite{CoHeHi}, the aforementioned result of Herzog and Hibi is generalized to a rigidity property of Betti numbers as follows: if $\beta_{ij}(R/I)=\beta_{ij}(R/I\lex)$ for some $i$ and all $j$, then $\beta_{kj}(R/I)=\beta_{kj}(R/I\lex)$ for all $k \geq i$ and all $j$. Our main result is a similar statement about the Hilbert function of local cohomology modules; in fact, we prove more: if $h^i(R/\Gin(I))_j=h^i(R/I\lex)_j$ for some $i$ and all $j$, then $h^k(R/I)_j=h^k(R/I\lex)_j$ for all $k \geq i$ and all $j$. In particular, if $h^i(R/\Gin(I))_j=h^i(R/I\lex)_j$, one has $h^i(R/I)_j=h^i(R/I\lex)_j$; we notice that it is not true for Betti numbers, as showed in \cite{MuHiExample}. 

This paper is structured as follows. In the first section we recall some preliminary results; we also prove Proposition \ref{cancellations} about consecutive cancellations for local cohomology, which is easy, as well as several results about the ideals with maximal local cohomology, which are useful later in the article. In the second section we prove that the local cohomology modules of $R/I$ have the same Hilbert functions as those of $R/I\lex$  if and only if $I$ and $I\lex$ have the same Bj\"orner-Wachs polynomial, see Proposition \ref{BW}. In Section $3$ we introduce the notion of partially sequentially Cohen-Macaulay, see Definition \ref{ipsCM} and, in Theorem \ref{partiallySCM}, we prove a characterization that is crucial in the sequent and last section, where we prove our main result, see Theorem \ref{yeah^3}, and some of its consequences in Corollary \ref{lbutnol}.

\section{Preliminaries}

Let $R=K[X_1, \dots, X_n]$ be a standard graded polynomial ring in $n$ variables over a field $K$, which we may assume infinite without loss of generality, and $I$ be a homogeneous ideal of $R$. Given a graded $R$-module $M$, we write $M_d$ to indicate its $d$th homogeneous component; we also denote by $\Hilb(M)$ its Hilbert series,  by $\Hilb(M)_j$ the $j$th coefficient of its Hilbert series, i.e. the $j$th value of its Hilbert function, and by $P_{M}(t)$ its Hilbert polynomial.
Let  $H^i_{\mm}(M)$ denote the $i$th local cohomology module of $M$ with support on the maximal graded ideal $\mm=(X_1, \ldots,X_n)$; set $h^i(M):=\Hilb(H^i_{\mm}(M))$ and $h^i(M)_j:=\Hilb(H^i_{\mm}(M))_j=\dim_K H^i_{\mm}(M)_j$. We also let $R_{[j]}=K[X_1, \dots, X_j]$ and $I_{[j]}=I \cap R_{[j]}$. We denote the generic initial ideal of $I$ with respect to the reverse lexicographic order by $\Gin(I)$. A monomial ideal $I$ is said to be a {\it lex-ideal} if for any monomial $u\in I_d$ and all monomials $v\in R_d$ with $u\prec_{\lex} v$ one has $v\in I$. Given an ideal $I$, there exists a unique lex-ideal with the same Hilbert function as $I$ (cf. for instance  \cite[Thm. 6.3.1]{HeHi1}); we denote it by $I\lex$. The saturation of $I$ is the homogeneous ideal $I\sat := I:\mm^\infty = \cup_{k=1}^\infty (I:\mm^k)$. It is well-known that $\Gin(I)\sat=\Gin(I\sat)$. Given a monomial $u \in R$, we denote by $m(u)$ the maximum integer for which $X_{m(u)}$ divides $u$.  A monomial ideal $I$ is said to be {\it weakly stable} if for any monomial $u \in I$ and for all $j < m(u)$, there exists a positive integer $k$ such that $X_j^k u/X^l_{m(u)} \in I$, where $l$ is the largest integer such that $X^l_{m(u)}$ divides $u$. Notice that lex-ideals and generic initial ideals are weakly stable and recall that the saturation of weakly stable ideals can be computed by saturating with the last variable, i.e. $I\sat = I:X_n^\infty = \cup_{k=1}^\infty (I:X_n^k)$. Also observe that, if $I$ is weakly stable then so is $I_{[j]}$ for all $j$. Such ideals are also referred to as {\it Borel-type} ideals, see \cite[Section 4.2]{HeHi1}  for further reference.
Let now $M$ be any $R$-module. Let $\mathcal{M}_{-1}=0$ and, given a non-negative integer $k$, we denote by $\mathcal{M}_k$ the maximum submodule of $M$ with dimension less than or equal to $k$; we call $\{\mathcal{M}_k\}_{k\geq -1}$ the dimension filtration of $M$. The module $M$ is said to be {\it sequentially Cohen-Macaulay}, sCM for short, if $\mathcal{M}_k/\mathcal{M}_{k-1}$ is either zero or a $k$-dimensional Cohen-Macaulay module for all $k\geq 0$; in this case, if $M=R/I$, we simply say that $I$ is a sCM ideal. It is not difficult to see that, if $I$ is weakly stable, then it is a sCM ideal. 
 
Finally, it is well-known by \cite[Thm. 3.7]{Bi}, \cite[Thm. 2]{Hu}, \cite[Thm. 31]{Pa} and \cite[Thm. 2.4 and 5.4]{Sb1} that, for all $i$ and $j$,
\begin{gather}
\beta_{ij}(R/I) \leq \beta_{ij}(R/\Gin(I)) \leq \beta_{ij}(R/I\lex)\label{pip}\\ 
h^i(R/I)_j \leq h^i(R/\Gin(I))_j \leq h^i(R/I\lex)_j.\label{pop}
\end{gather}

\subsection{Universal lex-ideals and critical Hilbert functions}
A special class of lex-ideals which are of interest in the following is what we call, following \cite{MuHi1} and \cite{MuHi} which are our main references here, {\it universal lex-ideals}; these were first introduced in the squarefree case in \cite{BaNoTh}.
A universal lex-ideal is simply a lex-ideal with at most $n$ minimal generators. They are universal in the sense that they are exactly those lex-ideals whose extensions to any polynomial overring of $R$ are still lex-ideals. A numerical function $H\: \NN\lra \NN$ is said to be {\it critical} if it is the Hilbert function of an universal lex-ideal and, accordingly, a homogeneous ideal is called {\it critical} if its Hilbert function is. By \cite[Thm. 1.6]{MuHi}, we know that,  if $I$ is a critical ideal, then $\depth R/I=\depth R/I\lex =n-|G(I\lex)|$, where $G(I\lex)$ denotes the set of minimal generators of $I\lex$. Moreover, if a lex-ideal has positive depth, \cite[Cor. 1.4]{MuHi} yields that it is universal. 



\subsection{Consecutive cancellations in Hilbert functions of local cohomology modules}
Using the proof of \cite[Prop. 30]{Pa}, in \cite[Thm. 1.1]{Pe} it is proven that the graded Betti numbers of a homogeneous ideal can be obtained from the graded Betti numbers of its associated lex-ideal by a sequence of consecutive cancellations. Following the same line of reasoning, one can prove, and we do, an analogue for the Hilbert function of local cohomology modules.

Let $\{c_{i,j}\}$ be a set of natural numbers, where $(i,j) \in \NN^2$. Fix an index $j$ and choose $i$ and $i'$ such that one is even and the other is odd; then we obtain a new set by a {\it cancellation} if we replace $c_{i,j}$ by $c_{i,j}-1$ and $c_{i',j}$ by $c_{i',j}-1$. Such a cancellation is said to be  {\it consecutive} if $i'=i+1$. 
If $I$ and $J$ are two homogeneous ideals of $R$ with the same Hilbert function, from Serre formula, cf. \cite[Thm. 4.4.3 (b)]{BrHe}, we have that 
\begin{equation*} \label{Serre}
\sum_{i=0}^d (-1)^i \ h^i(R/I)_j = \Hilb(R/I)_j - P_{R/I}(j) =  \Hilb(R/J)_j - P_{R/J}(j) = \sum_{i=0}^d (-1)^i \ h^i(R/J)_j,
\end{equation*}
where $d=\dim R/I$.
By \cite[Thm. 2.4 and 5.4]{Sb1}, the above equalities imply that we can obtain the set $\{h^i(R/I)_j\}$ from both $\{h^i(R/\ini_\prec(I))_j\}$ and $\{h^i(R/I\lex)_j\}$ by a sequence of cancellations. In fact, the next result shows that the use of consecutive cancellations is enough.

\begin{proposition} \label{cancellations}
Let $\prec$ be a monomial order. The set $\{h^i(R/I)_j\}$ can be obtained from both the sets $\{h^i(R/\ini_\prec(I))_j\}$ and $\{h^i(R/I\lex)_j\}$ by a sequence of consecutive cancellations.  
\end{proposition}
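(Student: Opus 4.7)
Our plan is to mirror the strategy of Peeva \cite[Thm.~1.1]{Pe} for Betti numbers, replacing minimal free resolutions with the long exact sequence of local cohomology arising from a Gr\"obner flat family. Since a composition of two sequences of consecutive cancellations is again a sequence of consecutive cancellations, it suffices to treat two cases separately: (a) $I$ versus $\ini_\prec(I)$ for an arbitrary monomial order $\prec$, and (b) $\Gin(I)$ versus $I\lex$. The desired statement for $I$ versus $I\lex$ then follows by combining (a) (with $\prec$ the reverse lexicographic order) and (b).

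For case (a), consider the standard weighted Gr\"obner flat family, viewing $R[t]$ as bigraded with $\deg t = 0$: there is a homogeneous ideal $\widetilde{I}\subset R[t]$ such that $\widetilde{M}:=R[t]/\widetilde{I}$ is $K[t]$-flat, $\widetilde{M}/t\widetilde{M} \cong R/\ini_\prec(I)$, and, by flat base change, $H^i_{\mm R[t]}(\widetilde{M})\otimes_{K[t]} K[t,(t-1)^{-1}] \cong H^i_\mm(R/I) \otimes_K K[t,(t-1)^{-1}]$ for every $i$. Multiplication by $t$ is injective on $\widetilde{M}$, yielding the short exact sequence
\[
0\longrightarrow \widetilde{M} \xrightarrow{\,\cdot t\,} \widetilde{M} \longrightarrow R/\ini_\prec(I) \longrightarrow 0.
\]
Applying $H^i_{\mm R[t]}$ produces a long exact sequence which, restricted to each internal degree $j$ and combined with the structure theorem for finitely generated $K[t]$-modules applied to $A^i_j:=H^i_{\mm R[t]}(\widetilde{M})_j$, yields non-negative integers $\gamma_{i,j}:=\dim_K \ker(\cdot t\colon A^{i+1}_j \to A^{i+1}_j)$ such that
\[
h^i(R/\ini_\prec(I))_j - h^i(R/I)_j \;=\; \gamma_{i-1,j}+\gamma_{i,j}\qquad\text{for all }i,j.
\]
This is precisely the numerical signature of a sequence of consecutive cancellations pairing $h^i(-)_j$ with $h^{i+1}(-)_j$.

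For case (b), one applies the same argument iteratively along the chain of monomial ideals $\Gin(I)=J_0,J_1,\ldots,J_N=I\lex$ constructed by Pardue in his proof of the Bigatti--Hulett--Pardue theorem \cite[Prop.~30]{Pa}: each successive pair $(J_k,J_{k+1})$ is linked by a flat family with constant Hilbert function, so the long-exact-sequence count of (a) yields consecutive cancellations at every step, and these compose to consecutive cancellations from $\Gin(I)$ to $I\lex$. The main technical point, and the reason why everything comes out consecutive, is the degree-wise Hilbert series bookkeeping of case (a): one must verify that the difference $h^i(R/\ini_\prec(I))_j - h^i(R/I)_j$ splits as a sum of two non-negative terms indexed by the consecutive pairs $(i-1,i)$ and $(i,i+1)$, a fact guaranteed by the shape of the local cohomology long exact sequence, whose connecting map links only $H^{i-1}$ with $H^i$.
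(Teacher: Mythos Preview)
Your plan follows the same route as the paper: Pardue's chain reduces the lex case to the initial-ideal case, and the initial-ideal case is handled by the Gr\"obner flat family together with the long exact sequence, producing the two-term splitting $h^i(R/\ini_\prec I)_j - h^i(R/I)_j = \gamma_{i-1,j} + \gamma_{i,j}$. Two points need attention, however.

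The main gap is in (a): you invoke the structure theorem for finitely generated $K[t]$-modules on $A^i_j = H^i_{\mm R[t]}(\widetilde M)_j$, but finite generation of $A^i_j$ over $K[t]$ is not automatic, since local cohomology modules are typically not finitely generated. The information you have extracted---that $\ker(\cdot t)$, $\operatorname{coker}(\cdot t)$, and the generic rank are all finite---does not suffice: for $A = K[t,t^{-1}]$ one has $\ker(\cdot t)=\operatorname{coker}(\cdot t)=0$ while the rank is $1$, so the identity $\dim\operatorname{coker}(\cdot t) = \operatorname{rank} + \dim\ker(\cdot t)$ that your count relies on would fail. The paper sidesteps this by running the flat-family argument on the $\Ext$ side (this is the content of \cite[Lemma~2.2]{Sb1}) and then translating via graded Local Duality; since $\Ext^{n-i}$ of a finitely generated module is finitely generated over the family, each graded piece is a finitely generated $K[t]$-module and the structure theorem applies cleanly. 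This is also why the paper's correction terms carry the indices $h_{n-i,j}+h_{n-i-1,j}$. If you prefer to stay on the local-cohomology side, you can recover finite generation of $A^i_j$ by computing $H^i_{\mm R[t]}(\widetilde M)$ from a finite free $R[t]$-resolution of $\widetilde M$, using that $H^q_{\mm R[t]}(R[t]) \cong H^q_\mm(R)\otimes_K K[t]$ is concentrated in $q=n$ and is degreewise free over $K[t]$.

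A smaller imprecision occurs in (b): not every link in Pardue's chain is a flat degeneration inside a fixed ring. Polarization passes to a larger polynomial ring and specialization by a generic linear form passes to a smaller one; neither is covered by the argument of (a). The paper deals with this by invoking the proof of \cite[Thm.~5.4]{Sb1}, which shows that those two operations preserve each $h^i(\,\cdot\,)_j$ exactly, so only the initial-ideal steps in the chain contribute cancellations.
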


\begin{proof}

By \cite[Prop. 30]{Pa} and its proof, there is a finite sequence of homogeneous ideals which starts with the ideal $I$ and terminates with the ideal $I\lex$; this is obtained by applying three types of basic operations: polarization, specialization by generic linear forms and taking initial ideals with respect to the lexicographic order. By the proof of \cite[Thm. 5.4]{Sb1}, we only need to check that $\{h^i(R/I)_j\}$ can be obtained by $\{h^i(R/\ini_{\prec}(I))_j\}$ by a sequence of consecutive cancellations. 
By the proof of \cite[Lemma 2.2]{Sb1} and the graded version of the Local Duality Theorem, it follows that there exist some non-negative integers $h_{i,j}$ such that for all $0 \leq i \leq d$ and all $j$ one has
\begin{equation*}
\begin{split}
h^i(R/\ini_{\prec}(I))_j & = h^i(R/I)_j + h_{n-i,j}+h_{n-i-1,j}, \\
\end{split}
\end{equation*}
where $h_{n-d-1,j}=0$. Therefore, since $0=\sum_{i=0}^d (-1)^i \ h^i(R/\ini_{\prec}(I))_j - \sum_{i=0}^d (-1)^i  h^i(R/I)_j=h_{n,j}$, the conclusion is now straightforward.
\end{proof}

\subsection{Maximality and rigidity results}
We conclude this preliminary section by recalling two rigidity results. Let us consider the two inequalities in \eqref{pip}. Recall that, in characteristic $0$, the first inequality is an equality if and only if $I$ is componentwise linear, as proved in \cite[Thm. 1.1]{ArHeHi}; see \cite[Thm. 2.9]{CaSb} for a generalization to any characteristic. It is also known that the three sets of Betti numbers coincide if and only if $I$ is a Gotzmann ideal, \cite[Cor. 1.4]{HeHi}. Moreover,  A. Conca, J. Herzog and T. Hibi proved the following theorem, which shows a rigidity property of the queue of a minimal free resolution.

\begin{theorem}[\cite{CoHeHi}, Cor. 2.7]\label{betarig}
Assume $\chara K=0$. Let $J$ be either the lex-ideal of $I$ or a generic initial ideal of $I$ and suppose that $\beta_{i} (R/I)= \beta_{i} (R/J)$ for some $i$. Then $\beta_k (R/I)= \beta_k (R/J)$ for all $k \geq i$. 
\end{theorem}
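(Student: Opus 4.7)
My plan is to mimic (for Betti numbers) the consecutive-cancellation argument of Proposition \ref{cancellations}, which is available for Betti numbers thanks to Peeva \cite{Pe}, and then upgrade the local equality at position $i$ to a rigidity statement for all larger homological indices.

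First, by \cite{Pe} the graded Betti table of $R/I$ is obtained from the graded Betti table of $R/J$ by a finite sequence of consecutive cancellations: each move simultaneously decreases $\beta_{k,j}$ and $\beta_{k+1,j}$ by one, for some $k$ and some internal degree $j$. For every $k$ let
\[
C_k \;=\; \#\{\text{cancellations between columns } k \text{ and } k+1\},
\]
summed over all $j$, with the convention $C_{-1}=0$. Since total Betti numbers are obtained by summing over $j$, one has
\[
\beta_k(R/J) - \beta_k(R/I) \;=\; C_{k-1}+C_k \qquad \text{for every } k,
\]
and each $C_k$ is non\-negative.

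Second, the hypothesis $\beta_i(R/I)=\beta_i(R/J)$ forces $C_{i-1}+C_i=0$, hence $C_{i-1}=C_i=0$. In particular no cancellation touches column $i$, and the Betti numbers in every pair of consecutive columns crossing column $i$ coincide term\-wise in each internal degree $j$: $\beta_{i-1,j}(R/I)=\beta_{i-1,j}(R/J)$ at every $j$ involved in an $(i-1,i)$ cancellation, and the analogous statement for $(i,i+1)$.

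Third, and this is the crux, one must propagate the vanishing to $C_k=0$ for all $k\geq i$. I would argue by induction on $k\geq i$. Assuming by contradiction that $k$ is minimal with $C_k>0$, minimality and the previous step give $C_{k-1}=0$, so $\beta_k(R/I)=\beta_k(R/J)-C_k<\beta_k(R/J)$, with a corresponding strict drop in some internal degree $j_0$. Here consecutive cancellations alone are not enough; one has to feed in the specific structure of $J$. For $J=I\lex$, the Eliahou--Kervaire formula expresses $\beta_{k,j}(R/I\lex)$ explicitly in terms of the minimal monomial generators of $I\lex$ and forbids the cancellation pattern just constructed. For $J=\Gin(I)$ in characteristic zero, one invokes the Aramova--Herzog--Hibi theorem (or \cite[Thm.~2.9]{CaSb}) applied to suitable truncations/syzygies: the equality of total Betti numbers at position $i$, read through the exact sequence of the $i$-th syzygy modules $\Omega_i(R/I)$ and $\Omega_i(R/J)$ (recall $\Tor_k^R(R/I,K)=\Tor_{k-i}^R(\Omega_i(R/I),K)$ for $k\geq i$), translates the hypothesis into a componentwise\-linear\-in\-shifted\-range property, which then forces $\beta_k(R/I)=\beta_k(R/J)$ for all $k\geq i$.

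The main obstacle is precisely this third step: consecutive cancellations give the matching at columns $i-1,i,i+1$ essentially for free, but pushing it all the way out to the end of the resolution requires a non\-formal ingredient, namely the extremal/combinatorial rigidity of $I\lex$ and $\Gin(I)$. Once that is in place, the induction closes and $\beta_k(R/I)=\beta_k(R/J)$ for every $k\geq i$.
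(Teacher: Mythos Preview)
The paper does not prove this theorem at all; it is quoted verbatim from \cite[Cor.~2.7]{CoHeHi} and used as background. So there is no ``paper's own proof'' to compare against, only the original Conca--Herzog--Hibi argument.

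On its own merits, your proposal has a genuine gap, and you have located it yourself: the entire content of the theorem lives in your ``third step''. The consecutive-cancellation setup is correct and the numbers $C_k$ are well defined, but the hypothesis $\beta_i(R/I)=\beta_i(R/J)$ gives only $C_{i-1}=C_i=0$, which is a tautological restatement. Since $C_{i+1}=\beta_{i+1}(R/J)-\beta_{i+1}(R/I)$ once $C_i=0$, asking for $C_{i+1}=0$ is exactly asking for $\beta_{i+1}(R/I)=\beta_{i+1}(R/J)$; the cancellation framework has not advanced the argument. Your two attempts to close the gap are not proofs: for $J=I\lex$, saying that the Eliahou--Kervaire formula ``forbids the cancellation pattern'' is an assertion, not an argument---EK describes the Betti table of $I\lex$, it says nothing about which cancellations can occur when passing to an arbitrary $I$ with the same Hilbert function. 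For $J=\Gin(I)$, the syzygy idea is suggestive, but $\Omega_i(R/I)$ is a module, not an ideal, and you have not explained how the hypothesis $\beta_i(R/I)=\beta_i(R/\Gin(I))$ yields any componentwise-linearity statement for it; \cite{ArHeHi} and \cite{CaSb} do not apply out of the box.

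The actual proof in \cite{CoHeHi} does not go through consecutive cancellations. It computes Betti numbers via Koszul homology with respect to a generic sequence of linear forms and exploits the explicit structure of Koszul cycles for strongly stable ideals (which both $\Gin(I)$ and $I\lex$ are in characteristic zero). The rigidity comes from an inequality of the form $\beta_{i+1}(R/J)-\beta_{i+1}(R/I)\leq c\cdot\bigl(\beta_i(R/J)-\beta_i(R/I)\bigr)$, obtained by tracking how Koszul homology behaves under reduction modulo a generic linear form; once the right-hand side is zero, so is the left. That is the non-formal ingredient you are missing.
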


\noindent
Observe that the statement is equivalent to saying that, if $\beta_{ij} (R/I)= \beta_{ij} (R/J)$ for some $i$ and all $j$, then $\beta_{kj} (R/I)= \beta_{kj} (R/J)$ for all $k \geq i$ and all $j$.

Consider now the two inequalities in \eqref{pop}. By \cite[Thm. 3.1]{HeSb}, the first inequality is an equality if and only if $I$ is a sCM ideal. Moreover, in the next theorem we collect three conditions proved in \cite[Thm. 0.1]{Sb} that characterize the maximality of the Hilbert functions of local cohomology modules and a fourth one that is of crucial importance in this paper.

\begin{theorem}\label{rigidit}
 For any homogeneous ideal $I$, TFAE:

\medskip

{\bf 1}. $(I\sat)\lex=(I\lex)\sat$;

\smallskip

{\bf 2}. $h^0(R/I)_j=h^0(R/I\lex)_j$, for all $j$;

\smallskip

{\bf 3}. $h^i(R/I)_j=h^i(R/I\lex)_j$, for all $i,j$;

\smallskip

{\bf 4}. $\Gin(I)\sat=(I\lex)\sat$.
\end{theorem}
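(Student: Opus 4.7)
The plan is to prove the new equivalence $(4)\Leftrightarrow(1)$, combining it with the chain $(1)\Leftrightarrow(2)\Leftrightarrow(3)$ already established in \cite[Thm.~0.1]{Sb}.

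For the easy direction $(4)\Rightarrow(1)$, I would take the lex-ideal of both sides of $\Gin(I)\sat=(I\lex)\sat$. The right-hand side is already a lex-ideal, since the saturation of a lex-ideal is itself a lex-ideal. For the left-hand side, I would use the commutativity $\Gin(I)\sat=\Gin(I\sat)$ recalled in the preliminaries together with the fact that $\Gin$ preserves Hilbert functions, identifying its lex-ideal as $(I\sat)\lex$. Equating the two sides yields $(I\sat)\lex=(I\lex)\sat$, i.e.\ condition $(1)$.

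For the harder direction $(1)\Rightarrow(4)$, after dispatching the Artinian case (in which both members of $(4)$ collapse to $R$), I would invoke the known equivalence applied to $I\sat$. Under $(1)$, the ideal $(I\sat)\lex=(I\lex)\sat$ is saturated, whence $h^{0}(R/I\sat)_{j}=0=h^{0}(R/(I\sat)\lex)_{j}$; the known theorem then forces $h^{i}(R/I\sat)_{j}=h^{i}(R/(I\sat)\lex)_{j}$ for all $i,j$. Sandwiching with \eqref{pop} for $I\sat$ extends these equalities to $\Gin(I\sat)=\Gin(I)\sat$ in the middle. In particular, $\Gin(I)\sat$ (saturated and Borel-fixed) and $(I\lex)\sat$ (a saturated lex-ideal, hence universal by \cite[Cor.~1.4]{MuHi}) have the same Hilbert function.

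The main obstacle is now to upgrade equal Hilbert functions into equal ideals. The plan is to exploit the rigid staircase form of a universal lex-ideal, $(X_{1}^{a_{1}},X_{1}^{a_{1}-1}X_{2}^{a_{2}},\dots,X_{1}^{a_{1}-1}\cdots X_{r}^{a_{r}})$ with $r\le n$, to prove that a universal lex-ideal is uniquely determined by its Hilbert function among all Borel-fixed ideals. The idea is that at each critical degree $\deg g_{i}$, matching both the Hilbert function of the universal lex-ideal and the Borel-fixed condition (together with the propagation of both constraints to subsequent degrees via ideal multiplication) leaves only one Borel-admissible monomial that may be adjoined to the Borel-closure of the previously determined generators---namely $g_{i}$ itself. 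Applying this uniqueness principle to $\Gin(I)\sat$ and $(I\lex)\sat$ yields their equality, which is precisely condition $(4)$.
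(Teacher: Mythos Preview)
Your proposal is correct and, at the structural level, matches the paper: both reduce to proving $1\Leftrightarrow 4$, invoking \cite[Thm.~0.1]{Sb} for $1\Leftrightarrow 2\Leftrightarrow 3$, and your argument for $4\Rightarrow 1$ is exactly the paper's (take lex of both sides, use $\Gin(I)\sat=\Gin(I\sat)$ and that $\Gin$ preserves Hilbert functions).

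For $1\Rightarrow 4$, however, you take a considerably longer route than the paper. Two remarks. First, your detour through local cohomology to conclude that $\Gin(I)\sat$ and $(I\lex)\sat$ share the same Hilbert function is unnecessary: once $(1)$ holds, both ideals have the Hilbert function of $I\sat$, since $\Gin(I)\sat=\Gin(I\sat)$ and $(I\lex)\sat=(I\sat)\lex$. Second, the ``uniqueness principle'' you plan to prove from scratch---that a universal lex-ideal is determined by its Hilbert function among Borel-fixed ideals---is precisely the content of \cite[Lemma~2.6]{MuHi1}, which the paper simply cites: since $(I\sat)\lex=(I\lex)\sat$ is a saturated lex-ideal, it has positive depth and is therefore universal; hence $I\sat$ is critical, and \cite[Lemma~2.6]{MuHi1} gives $\Gin(I\sat)=(I\sat)\lex$ in one stroke. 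Your staircase sketch is plausible in characteristic~$0$ (where Borel-fixed coincides with strongly stable), but for arbitrary Borel-fixed ideals in positive characteristic it would need more care; since you only apply it to $\Gin(I)\sat=\Gin(I\sat)$, you may as well invoke the cited lemma and avoid the issue entirely.
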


\begin{proof} We only need to show the equivalence between 4. and the other three conditions. \\
1. $\Rightarrow$ 4. Since $(I\sat)\lex$ is a saturated lex-ideal, it has positive depth and, therefore, it is an universal lex-ideal, i.e.  $I\sat$ is critical; therefore   
$\Gin(I)\sat=\Gin(I\sat)=(I\sat)\lex=(I\lex)\sat$, where the second equality holds by \cite[Lemma 2.6]{MuHi1}.\\ 
4. $\Rightarrow$ 1. Observe that the saturation of a lex-ideal is still a lex-ideal; therefore, $(I\lex)\sat=((I\lex)\sat)\lex=(\Gin(I)\sat)\lex$, which is $(\Gin(I\sat))\lex$. It is now enough to recall that  $I\sat$ and $\Gin(I\sat)$ have the same Hilbert function to obtain the desired conclusion.
\end{proof}

\begin{remark}\label{note} 
{\bf (a)} The equivalence of Conditions 1.-3. and Theorem 3.1 in \cite{HeSb} were proved when $\chara K=0$. Anyway, this hypothesis was used only because $\Gin(I)$ is not strongly stable in positive characteristic; since $\Gin(I)$ is  weakly stable in any characteristic though, it is easy to see that the original proofs work in general. Hence, throughout the paper, we do not need any assumption on the characteristic.\\
{\bf (b)} Under the hypothesis of Theorem \ref{betarig}, it is easy to find ideals which do not have maximal Betti numbers for all $k \leq i$ (see for instance \cite{MuHiExample}). Also, if $h^{i}(R/I)_j=h^{i}(R/I\lex)_j$ for all $j$, it is not true in general that $h^{k}(R/I)_j=h^k(R/I\lex)_j$ for all $k\leq i$ and all $j$.\\
{\bf (c)} An ideal $I$ which satisfies the equivalent conditions of Theorem \ref{rigidit} is a sCM ideal, see \cite[Prop. 1.9]{Sb}.\\
{\bf (d)} Conditions 1.-4. hold if and only if $h^i(R/\Gin(I))_j=h^i(R/I\lex)_j$ for all $i$ and $j$. Indeed, it is sufficient to recall that, by \cite[Lemma 2.3]{Co}, $\Gin(\Gin(I))=\Gin(I)$ and to apply 4.  \\
{\bf (e)} If $I$ is critical, then $\Gin(I)=I\lex$ and hence $h^i(R/I)_j=h^i(R/I\lex)_j$, for all $i,j$. Thus, $I$ is a sCM ideal. \\
{\bf (f)} It is proven in \cite[Thm. 1.6]{MuHi} that a critical ideal has the same depth of its lex-ideal and this is generalized by {\bf (e)}, since the $\depth R/I$ is the least integer $i$ such that $h^i(R/I)_j\neq 0$ for some $j$.
\end{remark}

\section{The Bj\"{o}rner-Wachs polynomial}

The Bj\"{o}rner-Wachs polynomial was introduced by A. Goodarzi in \cite{Go} in order to characterize  sCM ideals. These are exactly the ideals whose Bj\"{o}rner-Wachs polynomial does not change after taking the  generic initial ideal with respect to the reverse lexicographic order. 
In this section we prove a similar result, which characterizes those ideals whose Hilbert functions of local cohomoloy modules are maximal. First, we introduce some notations and recall a few results, see \cite{Go} for more details.

Let $I= \bigcap_{1}^s \qq_l$ be a reduced primary decomposition of $I$ and set $\pp_l=\sqrt{\qq_l}$ for all $1 \leq l \leq s$. Denote by $I^{\langle i \rangle}$ the ideal
$$
I^{\langle i \rangle} = \bigcap_{\dim R/\pp_l > i} \qq_l;$$
thus, $I^{\langle -1 \rangle}=I$ and $I^{\langle 0 \rangle}=\bigcap_{\pp_i \neq \mm} \qq_i = I\sat$. For $i=0,\ldots, d=\dim R/I$, we also denote  by $U_i(R/I)$  the $R$-module $I^{\langle i \rangle}/I^{\langle i-1 \rangle}$. We refer to such modules as the {\em  unmixed layers} of $R/I$; they are either $0$ or of dimension $i$.
We notice that,  $0 \subseteq I^{\langle 0 \rangle}/I \subseteq I^{\langle 1 \rangle}/I \subseteq \dots \subseteq I^{\langle d-1 \rangle}/I \subseteq R/I$ is the dimension filtration of $R/I$ and the modules $U_i(R/I)$ are the quotients that appear in the definition of sCM-module.

Let $h(M;t)$ be the $h$-polynomial of an $R$-module $M$ of dimension $d$, i.e. the numerator of its Hilbert series $\frac{h(M;t)}{(1-t)^d}$. The Bj\"{o}rner-Wachs polynomial of $R/I$, briefly BW-polynomial, is defined to be 
$$
\BW(R/I;t;w):=\sum_{k=0}^{\dim R/I} h(U_k(R/I);t)w^k.
$$
One of the  main results of \cite{Go} is that $\BW(R/I;t;w)=\BW(R/\Gin(I);t;w)$ if and only if $R/I$ is sCM.

\begin{proposition} \label{BW}
Let $I \subseteq R$ be a homogeneous ideal. The equivalent conditions of Theorem \ref{rigidit} hold if and only if 
$$
\BW(R/I;t;w)=\BW(R/I\lex;t;w).
$$
\end{proposition}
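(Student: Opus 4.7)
The plan is to treat the two directions separately: the reverse implication reduces to comparing the $w^0$ coefficient of the BW polynomial, while the forward implication combines Goodarzi's theorem with an explicit description of the unmixed layers of weakly stable ideals.

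For $(\Leftarrow)$, I would compare the coefficients of $w^0$, which yields $h(U_0(R/I);t)=h(U_0(R/I\lex);t)$. The key observation is that $U_0(R/I)=I\sat/I$ coincides as an $R$-module with $H^0_\mm(R/I)$, and since $U_0$ has dimension $0$ its $h$-polynomial is just its Hilbert series. Thus $h(U_0(R/I);t)=h^0(R/I)(t)$, and similarly for $I\lex$. This gives $h^0(R/I)(t)=h^0(R/I\lex)(t)$, which is condition $2$ of Theorem \ref{rigidit}.

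For $(\Rightarrow)$, I would first use Remark \ref{note}\,(c) to conclude that $R/I$ is sCM, and then invoke the main theorem of \cite{Go} to get $\BW(R/I;t;w)=\BW(R/\Gin(I);t;w)$. It then remains to show $\BW(R/\Gin(I);t;w)=\BW(R/I\lex;t;w)$. Both $\Gin(I)$ and $I\lex$ are weakly stable, so their associated primes have the form $(X_1,\dots,X_j)$, and inspecting the primary decomposition yields the formula
\[
J^{\langle k\rangle}=J:(X_{n-k}X_{n-k+1}\cdots X_n)^\infty\qquad\text{for all }k\geq 0
\]
for any weakly stable ideal $J$. By condition $4$ of Theorem \ref{rigidit}, $\Gin(I)\sat=(I\lex)\sat$; iteratively saturating both sides by $X_{n-1},\dots,X_{n-k}$ then yields $\Gin(I)^{\langle k\rangle}=(I\lex)^{\langle k\rangle}$ for all $k\geq 0$. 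Hence for $k\geq 1$ the unmixed layers $U_k(R/\Gin(I))$ and $U_k(R/I\lex)$ agree as $R$-modules, while for $k=0$ they have the same Hilbert series (since $\Gin(I)\sat=(I\lex)\sat$ and $\Hilb(\Gin(I))=\Hilb(I\lex)$). So all the $h$-polynomials $h(U_k;t)$ match and the two BW polynomials coincide.

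The main obstacle I expect is justifying the displayed formula for $J^{\langle k\rangle}$ when $J$ is weakly stable. This is a direct consequence of the facts that a product of distinct variables lies in a monomial prime $(X_1,\dots,X_j)$ precisely when one of the factors does, and that a $\pp_l$-primary component $\qq_l$ satisfies $\qq_l:f^\infty=R$ or $\qq_l$ according to whether $f\in\pp_l$ or not; once this is in place the argument is routine, and the $(\Leftarrow)$ direction is essentially a one-line recognition that $H^0_\mm(R/I)=U_0(R/I)$.
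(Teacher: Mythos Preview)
Your proof is correct and follows essentially the same strategy as the paper: for $(\Leftarrow)$ you both identify $U_0(R/I)=I\sat/I=H^0_\mm(R/I)$ to recover condition~2 of Theorem~\ref{rigidit}, and for $(\Rightarrow)$ you both pass through $\Gin(I)$ via Remark~\ref{note}\,(c) and Goodarzi's theorem, then show that the unmixed layers of $R/\Gin(I)$ and $R/I\lex$ coincide. The only minor difference is in this last step: you derive $\Gin(I)^{\langle k\rangle}=(I\lex)^{\langle k\rangle}$ via your explicit weakly-stable formula $J^{\langle k\rangle}=J:(X_{n-k}\cdots X_n)^\infty$, whereas the paper simply observes that for \emph{any} ideal $J$ and any $k\geq 0$ the ideal $J^{\langle k\rangle}$ is determined by the primary components of $J\sat$, so the single equality $\Gin(I)\sat=(I\lex)\sat$ already forces $\Gin(I)^{\langle k\rangle}=(I\lex)^{\langle k\rangle}$ for all $k\geq 0$ without any appeal to weak stability.
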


\begin{proof}
Suppose that $I$ and $I\lex$ have the same BW-polynomial, then, for all $i \geq 0$, $U_i(R/I)$ and $U_i(R/I\lex)$ have the same Hilbert function and the same holds for $I^{\langle i \rangle}$ and $(I\lex)^{\langle i \rangle}$. Therefore, for any $j$
$$h^0(R/I)_j=\Hilb(I^{\langle 0 \rangle})_j - \Hilb(I)_j=\Hilb((I\lex)^{\langle 0 \rangle})_j - \Hilb(I\lex)_j=h^0(R/I\lex)_j.$$

\noindent
Viceversa, let $I\lex=\cap_{j=1}^s\qq'_j$ be a reduced primary decomposition of $I\lex$; by assumption 
$$\Gin(I)^{\langle 0 \rangle}=\Gin(I)\sat=(I\lex)\sat=(I\lex)^{\langle 0 \rangle}=\bigcap_{\sqrt{q'_i} \neq \mm} \qq'_i.$$
Consequently, $\Gin(I)^{\langle i \rangle}=(I\lex)^{\langle i \rangle}$ for all $i\geq 0$, $U_i(\Gin(I))=U_i(I\lex)$ for $i=1, \dots, d-1$ and, therefore, 
$\Hilb(U_i(\Gin(I)))=\Hilb(U_i(I\lex))$ for $i\geq 0$. Thus,  $\Gin(I)$ and $I\lex$ have the same BW-polynomial. Moreover, by Remark \ref{note} (c), $I$ is a sCM ideal and, consequently, $I$ and $\Gin(I)$ have the same BW-polynomial by \cite[Thm. 17]{Go}.
\end{proof}

As a by-product of the previous proof, we get the following corollary.

\begin{corollary}
Let $I \subseteq R$ be a homogeneous ideal and suppose that $I$ and $I\lex$ have the same Bj\"{o}rner-Wachs polynomial $\BW$. Then,

\medskip

{\bf (i)}  $\Gin(I)^{\langle i \rangle}=(I\lex)^{\langle i \rangle}$ for all $i=0, \dots, d-1$.

\smallskip

{\bf (ii)} $\BW(R/\Gin(I);t;w)= \BW$, i.e. $I$ is a sCM ideal. 

\end{corollary}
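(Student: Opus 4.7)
The plan is essentially to extract what the proof of Proposition \ref{BW} has already produced. By hypothesis $I$ and $I\lex$ share the Bj\"orner-Wachs polynomial $\BW$, so Proposition \ref{BW} grants the equivalent conditions of Theorem \ref{rigidit}; in particular, condition 4 gives $\Gin(I)\sat=(I\lex)\sat$, which is exactly $\Gin(I)^{\langle 0 \rangle}=(I\lex)^{\langle 0 \rangle}$, establishing part (i) in the case $i=0$.

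To promote this to all $i\ge 1$, I would rely on the fact that for any homogeneous ideal $J$ the submodule $J^{\langle i \rangle}/J\subseteq R/J$ is the dimension-$\le i$ piece of the dimension filtration of $R/J$, hence $J^{\langle i \rangle}$ depends only on $J$ and not on any chosen primary decomposition. For $i\ge 1$, the condition $\dim R/\pp>i$ automatically excludes $\pp=\mm$, so stripping the $\mm$-primary component beforehand does not alter the intersection; that is, $(J^{\langle 0 \rangle})^{\langle i \rangle}=J^{\langle i \rangle}$ for every $i\ge 1$. Applying this observation to $J=\Gin(I)$ and to $J=I\lex$, the already established identity at level $0$ propagates to all $i=0,\dots,d-1$, which is (i).

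Given (i), the unmixed layers satisfy $U_i(\Gin(I))=U_i(I\lex)$ for every $i$ in the relevant range, so their $h$-polynomials coincide and the sum defining the BW-polynomial yields
$$
\BW(R/\Gin(I);t;w)=\BW(R/I\lex;t;w)=\BW,
$$
which is the first assertion of (ii). The claim that $I$ is sCM is then not an additional argument: Remark \ref{note}(c) already records that any ideal satisfying the equivalent conditions of Theorem \ref{rigidit} is sCM, and by \cite[Thm.~17]{Go} this is in turn equivalent to the identity $\BW(R/I;t;w)=\BW(R/\Gin(I);t;w)$ that we have just verified.

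The only potentially delicate point is the reduction $(J^{\langle 0 \rangle})^{\langle i \rangle}=J^{\langle i \rangle}$ for $i\ge 1$, since embedded primary components of $J$ are not individually canonical; the argument must therefore be phrased in terms of the dimension filtration (which is intrinsic) rather than in terms of a specific primary decomposition. Once this intrinsic viewpoint is fixed, the whole corollary is a short and formal consequence of the proof of Proposition \ref{BW}.
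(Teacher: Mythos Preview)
Your proposal is correct and follows essentially the same route as the paper: the corollary is stated there explicitly as a by-product of the proof of Proposition~\ref{BW}, which derives $\Gin(I)^{\langle 0\rangle}=(I\lex)^{\langle 0\rangle}$ from condition~4 of Theorem~\ref{rigidit}, propagates this to all $i\ge 0$ exactly via your observation $(J\sat)^{\langle i\rangle}=J^{\langle i\rangle}$, and then invokes Remark~\ref{note}(c) together with \cite[Thm.~17]{Go}. One tiny imprecision: the layers $U_i(\Gin(I))$ and $U_i(I\lex)$ are literally equal only for $i\ge 1$ (at $i=0$ the denominators $\Gin(I)$ and $I\lex$ differ), but their Hilbert functions---and hence $h$-polynomials---still agree, so your conclusion about the BW-polynomial is unaffected.
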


We also notice that the only if part of Proposition \ref{BW} is yielded by \cite[Thm. 20]{Go}.

\section{Partially sequentially Cohen-Macaulay modules}

As we already mentioned before, for all $i, j$ one has $h^i(R/I)_j \leq h^i(R/\Gin(I))_j$ and all such inequalities are equalities if and only if $I$ is a sCM ideal. Thus, one may ask whether also a result like Theorem \ref{betarig} holds: if $h^{i}(R/I)_j = h^{i}(R/\Gin(I))_j$ for all $j$, is it true that $h^{k}(R/I)_j = h^{k}(R/\Gin(I))_j$ for all $k\geq i$ and all $j$? It is easy to see that this is not the case, even if $i=0$. Indeed, if one considers a non-sCM ideal with positive depth $t$, then $h^i(R/I)_j=h^i(R/\Gin(I))_j=0$ for all $0\leq i < t$ and all $j$, but there exists at least one index $i$ for which $h^i(R/I)_j \neq h^i(R/\Gin(I))_j$ for some $j$, since $I$ is not a sCM ideal. Moreover, $\Gin(I\sat)=\Gin(I)\sat$ yields immediately that, for any ideal $I$, $h^0(R/I)_j=h^0(R/\Gin(I))_j$ for all $j$.

In this section we introduce the notion of {\em partially sequentially Cohen-Macaulay} modules which, as we shall see in Theorem \ref{partiallySCM}, naturally  characterize those ideals for which $h^k(R/I)_j =  h^k(R/\Gin(I))_j$ for all $k$ larger than some homological index $i$. We do it in the next definition. 

\begin{definition}\label{ipsCM}
Let $i$ be a non-negative integer. A finitely generated $R$-module $M$ with dimension filtration $\{\mathcal{M}_k\}_{k\geq -1}$ is called $i$-partially sequentially Cohen-Macaulay, $i$-sCM for short, if $\mathcal{M}_k/\mathcal{M}_{k-1}$ is either zero or a $k$-dimensional Cohen-Macaulay module for all $i \leq k \leq \dim M$. 
\end{definition}

Clearly, with this notation, sequentially Cohen-Macaulay modules are exactly the $0$-sCM modules.  Several known results about sCM modules can be easily generalized to our context. In the following two lemmata we collect some properties of this kind; the proofs follow the same line of the original ones, which can be found in \cite{Sc} and \cite{Go} respectively. As in the previous section, for a homogeneous ideal $J\subseteq R$, we denote its unmixed layers  by $U_{\bullet}(R/J)$.

\begin{lemma} \label{regular element}
Let $M$ be a finitely generated $R$-module with dimension filtration $\{\mathcal{M}_k\}_{k\geq -1}$. 

\medskip

{\bf 1}. If $M$ is $i$-sCM, then $H^k_\mm (M) \cong H^k_\mm(\mathcal{M}_k) \cong H^k_\mm (\mathcal{M}_k/\mathcal{M}_{k-1})$ for all $k \geq i$; 

\smallskip

{\bf 2}. If $x \in R$ is an  $M$-regular element, then $M$ is $i$-sCM if and only if $M/xM$ is $(i-1)$-sCM; 

\smallskip

{\bf 3}. If $I$ is a homogeneous ideal, then $R/I$ is $i$-sCM if and only if $R/I\sat$ is $i$-sCM.
\end{lemma}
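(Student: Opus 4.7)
The plan is to adapt the standard Schenzel--Goodarzi arguments for sequentially Cohen--Macaulay modules to the $i$-sCM setting, exploiting that the $i$-sCM hypothesis only constrains the upper part $\{\mathcal{M}_k/\mathcal{M}_{k-1}\}_{k\geq i}$ of the dimension filtration, leaving the lower part unrestricted.

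For Part 1, I would combine two short exact sequences with the local cohomology functor. From $0\to \mathcal{M}_{k-1}\to \mathcal{M}_k\to \mathcal{M}_k/\mathcal{M}_{k-1}\to 0$, Grothendieck's vanishing theorem yields $H^j_\mm(\mathcal{M}_{k-1})=0$ for all $j\geq k$ (since $\dim \mathcal{M}_{k-1}\leq k-1$), and the long exact sequence immediately produces the second isomorphism $H^k_\mm(\mathcal{M}_k)\cong H^k_\mm(\mathcal{M}_k/\mathcal{M}_{k-1})$. For the first isomorphism, I would apply $H^\bullet_\mm$ to $0\to \mathcal{M}_k\to M\to M/\mathcal{M}_k\to 0$ and reduce to showing the vanishing $H^{k-1}_\mm(M/\mathcal{M}_k)=H^k_\mm(M/\mathcal{M}_k)=0$. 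Since $M/\mathcal{M}_k$ admits a finite filtration with successive quotients $\mathcal{M}_j/\mathcal{M}_{j-1}$ for $j=k+1,\ldots,d$, each of which is either zero or Cohen--Macaulay of dimension $j\geq k+1$ by the $i$-sCM hypothesis, the required vanishing follows by induction on the length of the filtration using the long exact sequence.

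Part 2 is where I expect the main obstacle to lie. The first goal is to describe the dimension filtration of $M/xM$ in terms of that of $M$: taking $x$ of degree $1$, one verifies that $(\mathcal{M}_{k+1}+xM)/xM$ is the $k$-th piece of the dimension filtration of $M/xM$. A key ingredient is that each nonzero layer $\mathcal{M}_k/\mathcal{M}_{k-1}$ for $k\geq i$ has its associated primes among the $k$-dimensional associated primes of $M$, so the $M$-regular element $x$ remains regular on it. Killing a regular element on a $k$-dimensional Cohen--Macaulay module yields a $(k-1)$-dimensional Cohen--Macaulay module, so the $i$-sCM property of $M$ transfers to the $(i-1)$-sCM property of $M/xM$, and the same dictionary reversed handles the converse. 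The delicate point is establishing the compatibility of the dimension filtration with the quotient by $x$; this should be handled by working primary component by primary component.

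Part 3 is essentially bookkeeping on primary decompositions. Since $I\sat$ is obtained from $I$ by removing all $\mm$-primary components of a reduced primary decomposition, the ideals $I^{\langle k\rangle}$ coincide for every $k\geq 0$. Consequently the unmixed layers satisfy $U_k(R/I\sat)=U_k(R/I)$ for all $k\geq 1$, while $U_0(R/I\sat)=0$ and $U_0(R/I)$, being supported at $\mm$, is either zero or zero-dimensional and therefore automatically Cohen--Macaulay. The conditions defining $i$-sCM thus translate identically between $R/I$ and $R/I\sat$, and the equivalence follows.
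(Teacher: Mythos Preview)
Your proposal is correct and aligns with the paper's approach: the paper does not actually supply a proof for this lemma but simply remarks that ``the proofs follow the same line of the original ones, which can be found in [Sc] and [Go] respectively,'' and your outline is precisely an adaptation of those Schenzel--Goodarzi arguments to the $i$-sCM setting. The one point worth flagging is in Part~2: the identification of the dimension filtration of $M/xM$ with $\{(\mathcal{M}_{k+1}+xM)/xM\}_k$ relies on the fact that an $M$-regular element avoids every prime in $\operatorname{Ass}(M)=\bigcup_k \operatorname{Ass}(\mathcal{M}_k/\mathcal{M}_{k-1})$, which is exactly the hypothesis Schenzel uses; your remark about handling this ``primary component by primary component'' is the right instinct, and the argument goes through once one invokes that description of $\operatorname{Ass}(M)$.
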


\begin{lemma} \label{Afshin}
Let $I$ be a homogeneous ideal. 

\medskip

{\bf 1}. $R/I$ is $i$-sCM if and only if $\Hilb(U_k(R/I))=\Hilb(U_k(R/\Gin(I)))$ for all $k \geq i$; 

\smallskip

{\bf 2}. if $R/I$ is $i$-sCM, then $h^k(R/I)_j=\Hilb(U_k(R/I))_j$ for all $k \geq i$ and all $j$.
\end{lemma}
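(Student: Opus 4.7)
Both items are the natural partial counterparts of results of Schenzel \cite{Sc} and Goodarzi \cite{Go} for the sCM (i.e.\ $0$-sCM) case, and I would prove them by adapting the original arguments, replacing the range ``for all $k$'' with ``for $k \geq i$''.

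Part 2 would follow almost immediately from Lemma \ref{regular element}.1: the isomorphism $H^k_\mm(R/I) \cong H^k_\mm(U_k(R/I))$ for $k \geq i$, combined with the Cohen--Macaulayness of $U_k(R/I)$ in this range, identifies $h^k(R/I)_j$ with the corresponding Hilbert-series datum of $U_k(R/I)$ via graded local duality, using the rational-function identity $\Hilb(H^k_\mm(N))(t) = (-1)^k \Hilb(N)(t)$ valid for any Cohen--Macaulay module $N$ of dimension $k$ (so that each side determines the other). For Part 1, the forward implication then applies the same pipeline simultaneously to $R/I$ and to $R/\Gin(I)$: since $\Gin(I)$ is weakly stable it is sCM, hence $H^k_\mm(R/\Gin(I)) \cong H^k_\mm(U_k(R/\Gin(I)))$ for every $k$ with $U_k(R/\Gin(I))$ Cohen--Macaulay of dimension $k$, and the claim reduces to showing $h^k(R/I)_j = h^k(R/\Gin(I))_j$ for $k \geq i$ and all $j$. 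This last equality I would establish by exploiting the explicit description $\Gin(I)^{\langle k \rangle} = \Gin(I) : X_{n-k}^\infty$ valid for weakly stable ideals, together with the known compatibility between reverse-lex $\Gin$ and saturation by generic linear forms, which lets one transport the primary-decomposition data on the Gin side back to analogous saturations of $I$ that, under the $i$-sCM hypothesis, compute the Hilbert series of $I^{\langle k \rangle}$.

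For the converse of Part 1 I would argue by reverse induction on $k$, from $\dim R/I$ down to $i$. At each stage the equality of Hilbert series $\Hilb(U_k(R/I)) = \Hilb(U_k(R/\Gin(I)))$ and the Cohen--Macaulayness of $U_k(R/\Gin(I))$ combine, through the long exact sequence in local cohomology induced by $0 \to \mathcal{M}_{k-1} \to \mathcal{M}_k \to U_k(R/I) \to 0$, to force $U_k(R/I)$---which is unmixed of dimension $k$ by construction of the dimension filtration---to be Cohen--Macaulay as well. The main obstacle is precisely this last step, since equal Hilbert series alone do not imply Cohen--Macaulayness; one must leverage the inductive hypothesis (Cohen--Macaulayness of the higher layers $U_{k+1}(R/I), \dots, U_d(R/I)$) together with the uniform inequalities $h^p(R/I)_j \leq h^p(R/\Gin(I))_j$ from \eqref{pop} to pin down the vanishing of $H^p_\mm(U_k(R/I))$ for all $p < k$, which is where the full strength of the dimension-filtration machinery from Lemma \ref{regular element} is invoked.
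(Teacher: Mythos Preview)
Your proposal is correct and coincides with the paper's treatment: the paper does not prove this lemma at all, remarking only that ``the proofs follow the same line of the original ones, which can be found in \cite{Sc} and \cite{Go} respectively,'' which is precisely your opening sentence. The details you add --- Lemma~\ref{regular element}.1 together with the Cohen--Macaulay identity $\Hilb(H^k_\mm(N))(t)=(-1)^k\Hilb(N)(t)$ for Part~2, and the reverse induction with the inequalities \eqref{pop} for the converse of Part~1 --- are exactly the expected adaptations.

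One small remark on the forward direction of Part~1: your route reduces it to $h^k(R/I)=h^k(R/\Gin(I))$ and then proposes to recover this from the Hilbert series of the $I^{\langle k\rangle}$, which is essentially Part~1 again. The more direct line (as in \cite{Go}) is to compare $\Hilb(I^{\langle k\rangle})$ and $\Hilb(\Gin(I)^{\langle k\rangle})$ first --- via the containment $\Gin(I^{\langle k\rangle})\subseteq\Gin(I)^{\langle k\rangle}$ and a dimension count --- and only then read off the local-cohomology equalities; this avoids the apparent circularity.
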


\begin{remark} \label{remBW}
In the light of the previous lemma, if one lets the $i$th truncated Bj\"{o}rner-Wachs polynomial of $R/I$ be 
$$
\BW^i(R/I;t;w):=\sum_{k=i}^{\dim R/I} h(U_k(R/I);t)w^k,
$$

then  the following generalization of \cite[Thm. 17]{Go} holds:
$$R/I \text{ is } i\text{-sCM if and only if } \BW^i(R/I;t;w)=\BW^i(R/\Gin(I);t;w).$$
\end{remark}
 
Now, let $I$ be any ideal of $R$ and $l\in R_1$ be a generic linear form which, without loss of generality we may write as $l=a_1X_1 + \dots + a_{n-1}X_{n-1}-X_n$. Consider the map $g_n\: R \longrightarrow R_{[n-1]}$, defined by $X_i\mapsto X_i$ for $i=1,\ldots,n-1$ and $X_n\mapsto a_1X_1 + \dots + a_{n-1}X_{n-1}$. Then, the surjective homomorphism $\frac{R}{I} \longrightarrow \frac{R_{[n-1]}}{g_n(I)}$
has kernel $(I+(l))/I$  and induces the isomorphism  
\begin{equation}\label{generic}
\frac{R}{I+(l)} \cong \frac{R_{[n-1]}}{g_n(I)}.
\end{equation}

\noindent
Since $\Gin(I)$ is a monomial ideal, the image of $\Gin(I)$ in $R_{[n-1]}$ via the mapping $X_n \mapsto 0$ is $\Gin(I)_{[n-1]}$. With this notation, 
\cite[Cor. 2.15]{Gr} states that 
\begin{equation}\label{215}
\Gin(g_n(I))=\Gin(I)_{[n-1]}.
\end{equation}

The next theorem provides a useful characterization of partially sequentially Cohen-Macaulay modules that we are going to use in the next section.

\begin{theorem}\label{partiallySCM}
Let $I$ be a homogeneous ideal of $R$. TFAE:

\medskip

{\bf 1.} $R/I$ is $i$-sCM;

\smallskip

{\bf 2.}
$h^k(R/I)_j=h^k(R/\Gin(I))_j$ for all $k\geq i$ and for all $j$.
\end{theorem}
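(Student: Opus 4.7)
My plan is to prove the two implications separately: $(1)\Rightarrow(2)$ follows directly from the preparatory lemmas, while $(2)\Rightarrow(1)$ proceeds by induction on $i$ via a generic hyperplane section.

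For $(1)\Rightarrow(2)$, I combine the two parts of Lemma \ref{Afshin}. Since $\Gin(I)$ is weakly stable, $R/\Gin(I)$ is $0$-sCM, hence $i$-sCM, so Lemma \ref{Afshin}(2) gives $h^k(R/\Gin(I))_j=\Hilb(U_k(R/\Gin(I)))_j$ for $k\geq i$ and all $j$. The same applied to $R/I$ under the hypothesis that it is $i$-sCM yields $h^k(R/I)_j=\Hilb(U_k(R/I))_j$, and Lemma \ref{Afshin}(1) guarantees $\Hilb(U_k(R/I))=\Hilb(U_k(R/\Gin(I)))$ for $k\geq i$. Chaining the three equalities produces the desired identity.

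For $(2)\Rightarrow(1)$, I induct on $i$. The case $\dim R/I<i$ is vacuous, and $i=0$ is \cite[Thm.~3.1]{HeSb}. For $i=1$, the short exact sequence $0\to I\sat/I\to R/I\to R/I\sat\to 0$ together with $\Gin(I\sat)=\Gin(I)\sat$ shows that the hypothesis transfers from $I$ to $I\sat$ for $k\geq 1$; by Lemma \ref{regular element}(3), I may assume $\depth R/I\geq 1$, so $h^0(R/I)_j=h^0(R/\Gin(I))_j=0$ and we are reduced to the already-handled case $i=0$. For $i\geq 2$, after this reduction and a generic change of coordinates so that $X_n$ is regular on $R/I$, on $R/\Gin(I)$, and on each $U_k(R/\Gin(I))$ with $k\geq 1$, I consider the short exact sequence $0\to R/I(-1)\xrightarrow{X_n}R/I\to R/(I,X_n)\to 0$ and its analogue for $\Gin(I)$. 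Using \eqref{generic} and \eqref{215}, these identify with the analogous sequences over $R_{[n-1]}$ involving $\bar I$ and $\Gin(\bar I)=\Gin(I)_{[n-1]}$, and the goal reduces to showing $h^{k'}(R_{[n-1]}/\bar I)_j=h^{k'}(R_{[n-1]}/\Gin(\bar I))_j$ for all $k'\geq i-1$ and $j$. Once this is established, the inductive hypothesis yields that $R_{[n-1]}/\bar I$ is $(i-1)$-sCM, and Lemma \ref{regular element}(2) concludes.

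The main obstacle lies in this last step. The long exact sequences in local cohomology express $h^{k-1}(R/(I,X_n))_j$ as the sum of $\dim \Coker(\cdot X_n\colon H^{k-1}_\mm(R/I)_{j-1}\to H^{k-1}_\mm(R/I)_j)$ and $\dim \Ker(\cdot X_n\colon H^{k}_\mm(R/I)_{j-1}\to H^{k}_\mm(R/I)_j)$, and analogously for $\Gin(I)$. On the $\Gin$ side, Lemma \ref{regular element}(1) gives $H^k_\mm(R/\Gin(I))\cong H^k_\mm(U_k(R/\Gin(I)))$ for $k\geq 1$, and since $X_n$ is regular on each CM layer $U_k$, multiplication by $X_n$ is surjective on this top local cohomology, so the corresponding cokernels vanish. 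The inequality \eqref{pop} applied in $R_{[n-1]}$, together with the hypothesis $h^k(R/I)_j=h^k(R/\Gin(I))_j$ for $k\geq i$, then forces the cokernels on the $R/I$ side to vanish as well in the relevant range, from which the desired equality follows. The careful tracking of kernel and cokernel dimensions across the LES is the technical heart of the argument.
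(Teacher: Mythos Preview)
Your proof is correct and follows essentially the same strategy as the paper's: both directions of the equivalence are handled the same way, with $(2)\Rightarrow(1)$ proceeding via saturation to positive depth, a generic hyperplane section, the comparison \eqref{215}, and Lemma~\ref{regular element}(2). The only superficial differences are that you induct on $i$ while the paper inducts on $\dim R/I$, and you arrange a single coordinate change so that $X_n$ works for both $I$ and $\Gin(I)$ whereas the paper uses a generic form $l$ for $R/I$ and $X_n$ for $R/\Gin(I)$ separately; your kernel/cokernel bookkeeping unpacks exactly the inequality $h^{k-1}(R/(I+(l)))\geq (t-1)h^k(R/I)$ that the paper uses directly.
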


\begin{proof}
$1. \Rightarrow 2.$ is a direct consequence of Lemma \ref{Afshin}, because also $R/\Gin(I)$ is $i$-sCM.

\smallskip

$2. \Rightarrow 1.$ We prove the converse by induction on $d=\dim R/I$. If $d=0$ the ring is Cohen-Macaulay and then sCM. Therefore, without loss of generality we may assume that $R/I$ and $R/\Gin(I)$ have positive dimension and, by saturating $I$ if necessary, positive depth, cf. Lemma \ref{regular element}.3. Since $R/\Gin(I)$ is sCM,  \cite[Thm. 1.4]{HeSb} implies that there exists a linear form $l'$ which is $R/\Gin(I)$- and $\Ext^{n-k}_R(R/\Gin(I),\omega_R)$-regular for all $k$. Here $\omega_R$ denotes the canonical module of $R$. Recall that a change of coordinates does not affect the computation of the generic initial ideal; therefore, we may as well assume that $X_n$ is $R/\Gin(I)$-\, and $\Ext^{n-k}_R(R/\Gin(I),\omega_R)$-regular for all $k$. Thus, for all $k$, the short exact sequence $0 \rightarrow R/\Gin(I) (-1) \rightarrow R/\Gin(I) \rightarrow R/(\Gin(I)+(X_n)) \rightarrow 0$, gives raise  via Local Duality to short exact sequences
$$
0 \rightarrow H^{k-1}_{\mm}(R/(\Gin(I)+(X_n))) \rightarrow H^k_{\mm}(R/\Gin(I))(-1) \rightarrow H^k_{\mm}(R/\Gin(I)) \rightarrow 0.
$$ 
Thus, $h^{k-1}(R/(\Gin(I)+(X_n)))=(t-1)h^k(R/\Gin(I))$, for all $k$.

We also know that there exists a generic linear form $l$ which is $R/I$-regular; therefore, for all $k$,  the exact sequences 
$$
0 \rightarrow K \rightarrow H^{k-1}_{\mm}(R/(I+(l))) \rightarrow H^k_{\mm}(R/I)(-1) \rightarrow H^k_{\mm}(R/I) \rightarrow C \rightarrow 0
$$
imply that  $h^{k-1}(R/(I+(l))) \geq (t-1)h^k(R/I)$ for all $k$.

By \eqref{generic} and \eqref{215}, we thus have
\begin{equation*}\begin{split}
(t-1)h^k(R/I) &\leq  h^{k-1}(R/(I+(l)))=h^{k-1}(R_{[n-1]}/g_n(I)) \\
& \leq  h^{k-1}(R_{[n-1]}/\Gin(g_n(I)))
=h^{k-1}(R_{[n-1]}/\Gin(I)_{[n-1]})\\
&= h^{k-1}(R/(\Gin(I)+(X_n)))=(t-1)h^k(R/\Gin(I)),
\end{split}
\end{equation*}
where all of the above inequalities are equalities for all  $k \geq i$ by hypothesis.  In particular, $h^{k}(R_{[n-1]}/g_n(I)) = h^{k}(R_{[n-1]}/\Gin(g_n(I)))$ for all $k \geq i-1$, together with the inductive assumption, imply that $R_{[n-1]}/g_n(I) \simeq R/(I+(l))$ is $(i-1)$-sCM. Consequently, $R/I$ is $i$-sCM by Lemma \ref{regular element}.2, because $l$ is $R/I$-regular.
\end{proof}

\section{The rigidity property}

In this section we shall prove our main result, which establishes the desired analogue to Theorem \ref{betarig} and generalizes Theorem \ref{rigidit}. We start with some preliminary results.

  
\begin{lemma}
Let $I$ and $I'$ be two weakly stable ideals. If they have the same Hilbert polynomial, then $I_{[n-i]}$ and $I'_{[n-i]}$ have the same Hilbert polynomial for all $i=0, \dots, n$. Moreover the ideals $(I_{[n-i+1]}:X_{n-i+1}^{\infty})_{[n-i]}$ and $(I'_{[n-i+1]}:X_{n-i+1}^{\infty})_{[n-i]}$ have the same Hilbert polynomial.
\end{lemma}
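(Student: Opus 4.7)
The plan is to isolate a single reduction step and iterate it. For any weakly stable ideal $J$ in $S=K[X_1,\dots,X_m]$ the first step is to establish the reduction formula
\begin{equation}\label{keyplan}
P_{S_{[m-1]}/J_{[m-1]}}(t)=P_{S/J}(t)-P_{S/J}(t-1),
\end{equation}
which says that the Hilbert polynomial of the one-variable restriction is entirely determined by that of $J$. Granting \eqref{keyplan}, both assertions follow by descending induction on the number of variables.

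To prove \eqref{keyplan} I would use the four-term exact sequence
\[
0\to\bigl((J:X_m)/J\bigr)(-1)\longrightarrow (S/J)(-1)\xrightarrow{\,\cdot X_m\,} S/J\longrightarrow S/(J+(X_m))\to 0.
\]
Since $J$ is weakly stable, $J\sat = J:X_m^\infty$ and hence $(J:X_m)/J$ is a submodule of $J\sat/J = H^0_\mm(S/J)$, which has finite length; in particular its Hilbert polynomial vanishes. Moreover, $J$ being monomial yields the identification $S/(J+(X_m))\cong S_{[m-1]}/J_{[m-1]}$ (setting $X_m=0$ kills precisely the generators of $J$ involving $X_m$). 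Passing to Hilbert polynomials in the displayed sequence produces \eqref{keyplan}.

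The first statement of the lemma follows at once by iteration: \eqref{keyplan} applied to $I$ and $I'$ in $R$ yields that $I_{[n-1]}$ and $I'_{[n-1]}$ share the same Hilbert polynomial; since a restriction of a weakly stable ideal remains weakly stable, the step can be repeated inside $R_{[n-1]}$, and so on down to $R_{[n-i]}$.

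For the second statement, set $J:=I_{[n-i+1]}:X_{n-i+1}^\infty$ and $J':=I'_{[n-i+1]}:X_{n-i+1}^\infty$, regarded as ideals of $R_{[n-i+1]}$. Saturation does not change the Hilbert polynomial and, by the first part, $I_{[n-i+1]}$ and $I'_{[n-i+1]}$ share the same Hilbert polynomial, so the same holds for $J$ and $J'$. Since $J$ is a saturated monomial ideal in $R_{[n-i+1]}$, the variable $X_{n-i+1}$ is a nonzerodivisor on $R_{[n-i+1]}/J$, so $(J:X_{n-i+1})/J=0$; consequently \eqref{keyplan} applies verbatim to $J$ and to $J'$ and produces the required equality of Hilbert polynomials of $J_{[n-i]}$ and $J'_{[n-i]}$. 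The only point I expect to require care along the way is the control of the torsion term $(J:X_m)/J$ in each use of \eqref{keyplan}, but its finite length is automatic both in the weakly stable setting (first part) and in the saturated monomial setting (second part).
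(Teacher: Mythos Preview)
Your argument is correct and follows the same inductive descent as the paper, but the mechanism for the one-variable reduction differs. The paper argues that for a weakly stable ideal $I$ the saturation $I:X_n^\infty$ is extended from $R_{[n-1]}$, so its Hilbert series (hence polynomial) determines that of $(I:X_n^\infty)_{[n-1]}$; it then invokes the identity $I_{[n-1]}:X_{n-1}^\infty=(I:X_n^\infty)_{[n-1]}:X_{n-1}^\infty$ from \cite[Lemma~1.4]{CaSb} to link back to $I_{[n-1]}$. Your route via the multiplication-by-$X_m$ exact sequence and the finite length of $(J:X_m)/J\subseteq H^0_\mm(S/J)$ yields the difference formula $P_{S_{[m-1]}/J_{[m-1]}}(t)=P_{S/J}(t)-P_{S/J}(t-1)$ directly, without appealing to the external lemma; this is more self-contained and arguably cleaner.

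One small imprecision: you write that ``$J$ is a saturated monomial ideal in $R_{[n-i+1]}$, so $X_{n-i+1}$ is a nonzerodivisor on $R_{[n-i+1]}/J$''. This implication fails for general saturated monomial ideals (e.g.\ $(X_1X_2)\subset K[X_1,X_2]$). The correct reason here is simply that $J=I_{[n-i+1]}:X_{n-i+1}^\infty$ by definition, whence $J:X_{n-i+1}=J$; alternatively, $J$ is weakly stable and saturated, which also forces $J:X_{n-i+1}=J$. Either way your formula applies, since $J$ is weakly stable and that is all your key step requires.
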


\begin{proof}
  Recall that the saturation of a weakly stable ideal can be computed by saturating with respect to the last variable, and that saturation does not change the Hilbert polynomial. Moreover, by \cite[Lemma 1.4]{CaSb}, $I_{[n-1]}:X_{n-1}^{\infty} = (I:X_n^{\infty})_{[n-1]}:X_{n-1}^{\infty}$. 
Now the proof of the lemma is a straightforward consequence of the above.
\end{proof}

\begin{proposition} \label{weakrigidity}
Let $I \subseteq R$ be a weakly stable ideal. If $h^i(R/I)_j=h^i(R/I\lex)_j$ for some $i \geq 0$ and all $j$, then $h^k(R/I)_j=h^k(R/I\lex)_j$ for all $k \geq i$ and all $j$.  
\end{proposition}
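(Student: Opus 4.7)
The plan is to proceed by induction on $n$. The base case $n=1$ is immediate, since in $K[X_1]$ every weakly stable ideal coincides with its lex-ideal. In the inductive step, if $i=0$ the conclusion follows directly from Theorem \ref{rigidit}, so I assume $i \geq 1$.

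Since $I$ is weakly stable, $R/I$ is sCM, and hence Theorem \ref{partiallySCM} (applied with $i=0$) yields $h^k(R/I)_j = h^k(R/\Gin(I))_j$ for all $k$ and $j$. The problem therefore reduces to comparing two weakly stable ideals with the same Hilbert function. I would mimic the strategy of the proof of Theorem \ref{partiallySCM}: invoking the sCM property (through \cite[Thm. 1.4]{HeSb}), one finds a generic linear form $l$ that is $R/I$-regular and $\Ext_R^{n-k}(R/I,\omega_R)$-regular for every $k$, and an analogous form $l'$ for $R/I\lex$. Via local duality and the short exact sequences $0 \to R/I(-1) \xrightarrow{\cdot l} R/I \to R/(I+(l)) \to 0$ and its counterpart for $I\lex$, one obtains the $(t-1)$-relations
\[
h^{k-1}\bigl(R/(I+(l));t\bigr) = (t-1)\,h^k(R/I;t), \qquad h^{k-1}\bigl(R/(I\lex+(l'));t\bigr) = (t-1)\,h^k(R/I\lex;t)
\]
for every $k \geq 1$.

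Combining these with the isomorphisms \eqref{generic} and \eqref{215}, the hypothesis $h^i(R/I)_j = h^i(R/I\lex)_j$ translates into an equality of the form $h^{i-1}(R_{[n-1]}/A)_j = h^{i-1}(R_{[n-1]}/A\lex)_j$ for all $j$, where $A$ is a weakly stable ideal in $R_{[n-1]}$ derived from $I$ via the map $g_n$ (possibly combined with an appropriate saturation step); the preceding lemma ensures that $A$ and $A\lex$ are genuinely in the correct set-up, having the same Hilbert polynomial. Applying the inductive hypothesis then yields $h^{k-1}(R_{[n-1]}/A)_j = h^{k-1}(R_{[n-1]}/A\lex)_j$ for every $k-1 \geq i-1$ and all $j$, and inverting the $(t-1)$-relations produces the conclusion $h^k(R/I)_j = h^k(R/I\lex)_j$ for all $k \geq i$ and all $j$.

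The main hurdle is the identification step: one must verify carefully that the reduction to $R_{[n-1]}$ really produces ideals whose comparison is still governed by lex-ideal formation, despite the fact that saturation and lex-formation do not naively commute (the Hilbert functions of $(I\sat)_{[n-1]}$ and $((I\lex)\sat)_{[n-1]}$ may well differ when $h^0(R/I) \neq h^0(R/I\lex)$). Bridging this gap is exactly where the preceding lemma on the Hilbert polynomial invariance under iterated saturation-restriction enters, together with the observation that the $(t-1)$-relation collapses the degree-$0$ discrepancy by a shift in cohomological index.
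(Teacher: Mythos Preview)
Your inductive approach is quite different from the paper's, which does not induct on $n$ at all. The paper uses Proposition~\ref{cancellations} (consecutive cancellations) as the key device: since $h^i(R/I)_j=h^i(R/I\lex)_j$ for all $j$, no cancellation can cross level $i$, so the alternating sum $\sum_{k\geq i}(-1)^k h^k(\,\cdot\,)_j$ is the same for $R/I$ and $R/I\lex$. Combined with Serre's formula and \cite[Lemma 1.5]{CaSb}, this gives \emph{directly} that $J=(I_{[n-i+1]}:X_{n-i+1}^\infty)_{[n-i]}$ and $J'=((I\lex)_{[n-i+1]}:X_{n-i+1}^\infty)_{[n-i]}$ have the same Hilbert function; since $J'$ is a saturated lex-ideal in $R_{[n-i]}$, hence universal, $J$ is critical and Remark~\ref{note}(e) finishes the argument in one stroke.

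Your approach, going down one variable at a time via the $(t-1)$-relations, has a genuine gap that you correctly locate but do not close. After one reduction you obtain weakly stable ideals $A=(I\sat)_{[n-1]}$ and $B=((I\lex)\sat)_{[n-1]}$ in $R_{[n-1]}$ with $h^{i-1}(R_{[n-1]}/A)=h^{i-1}(R_{[n-1]}/B)$. To apply the inductive hypothesis you need $h^{i-1}(R_{[n-1]}/A)=h^{i-1}(R_{[n-1]}/A\lex)$. When $i-1\geq 1$ this is fine, because $B$ and $A\lex$ are lex-ideals with the same Hilbert polynomial (by the preceding lemma), hence the same saturation, and $h^{i-1}$ for $i-1\geq 1$ only sees the saturation. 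But when $i=1$ you must compare $h^0(R_{[n-1]}/A)$ with $h^0(R_{[n-1]}/A\lex)$, and here the discrepancy $\Hilb(A\lex)-\Hilb(B)=\Hilb(A)-\Hilb(B)=(1-t)\bigl(h^0(R/I\lex)-h^0(R/I)\bigr)$ is exactly what stands in the way; nothing in your argument forces it to vanish. The ``$(t-1)$-relation collapses the degree-$0$ discrepancy'' remark does not apply at this final step, and the preceding lemma gives only equality of Hilbert polynomials, not Hilbert functions. What actually forces $\Hilb(A)=\Hilb(B)$ is precisely the consecutive-cancellation argument you have bypassed; without it (or an equivalent substitute) the case $i=1$ remains open in your scheme.
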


\begin{proof}
If $i=0$ we know that the statement is true by Theorem \ref{rigidit} and, thus, we may assume $i \geq 1$. By Proposition \ref{cancellations}, the set $\{h^k(R/I)_j\}$ can be obtained by $\{h^k(R/I\lex)_j\}$ by means of a sequence of consecutive cancellations. Since at level $i$ there is nothing to be cancelled, the set $\{h^k(R/I)_j\}_{k \geq i}$ can be obtained from $\{h^k(R/I\lex)_j\}_{k \geq i}$ by a sequence of consecutive cancellations. In particular this implies that
$$
\sum_{k=i}^{n} (-1)^k \ h^k(R/I)_j = \sum_{k=i}^{n} (-1)^k \ h^k(R/I\lex)_j.
$$
Set $J=(I_{[n-i+1]}:X_{n-i+1}^{\infty})_{[n-i]}$ and $J'=((I\lex)_{[n-i+1]}:X_{n-i+1}^{\infty})_{[n-i]}$. Serre formula and \cite[Lemma 1.5]{CaSb} now imply 

$$
\Hilb(R_{[n-i]}/J)_j-P_{R_{[n-i]}/J}(j)= \sum_{k=0}^{n-i} (-1)^k \ h^k(R_{[n-i]}/J)_j = \phantom{aa}
$$
$$
\phantom{aa} 
= \sum_{k=0}^{n-i} (-1)^k \ h^k(R_{[n-i]}/J')_j= \Hilb(R_{[n-i]}/J')_j-P_{R_{[n-i]}/J'}(j);
$$
By the previous lemma, $J$ and $J'$ have the same Hilbert polynomial and, therefore, the same Hilbert function; since $J'$ is a lex-ideal, we conclude that $J'=J\lex$. Moreover, the ideal $(I\lex)_{[n-i+1]}:X_{n-i+1}^{\infty}$ is a  saturated lex-ideal of $R_{[n-i+1]}$ and, therefore, $J'$ has at most $n-i$ minimal generators. Hence, $J'$ is an universal lex-ideal and $J$ is a critical ideal. By Remark \ref{note} (e), we get  $h^k(R_{[n-i]}/J)_j=h^k(R_{[n-i]}/J')_j$ for $k=0, \dots, n-i$ and, by  \cite[Lemma 1.5]{CaSb}, this is enough to imply the conclusion.
\end{proof}




It might be useful to re-state what we have just proved as follows.

\begin{corollary} \label{caracterisescion}
Let $I$ be a weakly stable ideal and let $i$ be a positive integer. \\
Set $J=(I_{[n-i+1]}:X_{n-i+1}^{\infty})_{[n-i]}$ and $J'=((I\lex)_{[n-i+1]}:X_{n-i+1}^{\infty})_{[n-i]}$. TFAE: 

\medskip

{\bf (i)} $h^i(R/I)_j=h^i(R/I\lex)_j$ for all $j$; 

\smallskip

{\bf (ii)} $h^k(R/I)_j=h^k(R/I\lex)_j$ for all $k \geq i$ and all $j$;

\smallskip

{\bf (iii)} $J$ and $J'$ have the same Hilbert function; 

\smallskip

{\bf (iv)} $J$ is a critical ideal; 

\smallskip

{\bf (v)} $\Gin(J)=J'$.
\end{corollary}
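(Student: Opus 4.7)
The plan is to observe that this corollary is essentially a repackaging of the proof of Proposition \ref{weakrigidity}, so the main task is to sort the intermediate steps of that proof into a proper chain of implications. I propose the cycle: (ii) $\Rightarrow$ (i) is trivial, (i) $\Rightarrow$ (ii) is Proposition \ref{weakrigidity}, and then I would establish (ii) $\Rightarrow$ (iii) $\Rightarrow$ (iv) $\Rightarrow$ (v) $\Rightarrow$ (ii).

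For (ii) $\Rightarrow$ (iii), the consecutive-cancellation result (Proposition \ref{cancellations}) together with Serre's formula and \cite[Lemma 1.5]{CaSb} allows one to rewrite the equalities $h^k(R/I)_j = h^k(R/I\lex)_j$ for $k \geq i$ as
$$
\Hilb(R_{[n-i]}/J)_j - P_{R_{[n-i]}/J}(j) = \Hilb(R_{[n-i]}/J')_j - P_{R_{[n-i]}/J'}(j).
$$
By the lemma preceding Proposition \ref{weakrigidity}, $J$ and $J'$ already share the same Hilbert polynomial, which together with the above forces them to share the same Hilbert function.

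For (iii) $\Rightarrow$ (iv): since $J'$ is itself a lex-ideal with the same Hilbert function as $J$, uniqueness of lex-ideals gives $J\lex = J'$. Moreover $J' = ((I\lex)_{[n-i+1]}:X_{n-i+1}^{\infty})_{[n-i]}$ is obtained by eliminating the last variable from a saturated lex-ideal of $R_{[n-i+1]}$; hence by \cite[Cor. 1.4]{MuHi} it has at most $n-i$ minimal generators, so $J\lex = J'$ is a universal lex-ideal and $J$ is critical.

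Finally (iv) $\Leftrightarrow$ (v) follows at once from Remark \ref{note} (e): if $J$ is critical then $\Gin(J) = J\lex = J'$; conversely, if $\Gin(J) = J'$, then $J'$ is a lex-ideal having the same Hilbert function as $J$ and at most $n-i$ minimal generators, so $J\lex = J'$ is universal and $J$ is critical. For (v) $\Rightarrow$ (ii), once $J$ is known to be critical the same Remark \ref{note} (e) yields $h^k(R_{[n-i]}/J)_j = h^k(R_{[n-i]}/J')_j$ for all $k$ and $j$, and \cite[Lemma 1.5]{CaSb} transports these equalities into $h^k(R/I)_j = h^k(R/I\lex)_j$ for all $k \geq i$, as required. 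I do not expect any genuine obstacle here; the whole argument has already been carried out inside the proof of Proposition \ref{weakrigidity}, and the only real work is to make explicit that the intermediate objects $J$, $J'$ introduced there satisfy the five equivalent properties listed in the statement.
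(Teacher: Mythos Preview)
Your proposal is correct and follows essentially the same route as the paper. The paper's own proof simply points back into the proof of Proposition~\ref{weakrigidity}, recording the chain (i) $\Rightarrow$ (iii) $\Rightarrow$ (iv) $\Rightarrow$ (ii) $\Rightarrow$ (i), and then adds (v) $\Rightarrow$ (iii) trivially and (iv) $\Rightarrow$ (v) via \cite[Lemma~2.6]{MuHi1} (your Remark~\ref{note}(e) serves the same purpose). Two small remarks: in your step (ii) $\Rightarrow$ (iii) the appeal to Proposition~\ref{cancellations} is superfluous, since (ii) already gives the needed alternating-sum identity directly; and when you invoke $J\lex=J'$ inside (iv) $\Rightarrow$ (v), note that this equality is available because your cycle has already passed through (iii) (or, in the (v) $\Rightarrow$ (ii) direction, because $\Gin(J)=J'$ forces equal Hilbert functions). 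With those clarifications the argument matches the paper's.
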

\begin{proof}
  In the proof of the proposition we showed that (i) $\Ra$ (iii) $\Ra$ (iv) $\Ra$ (ii), which in turn obviously implies (i). Condition (iii) descends immediately by (v), whereas (iv) $\Ra$ (v) follows by \cite[Lemma 2.6]{MuHi1}, since $J'=J\lex$.
\end{proof}

We are now ready to prove the main theorem of the paper.

\begin{theorem}\label{yeah^3}
Let $I$ be a homogeneous ideal and let $i$ be a non-negative integer such that 
$h^i(R/\Gin(I))_j=h^i(R/I\lex)_j$ for all $j$. Then $h^k(R/I)_j=h^k(R/I\lex)_j$ for all $k \geq i$ and all $j$. 
\end{theorem}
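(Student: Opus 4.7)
\emph{Initial reduction.} Since $\Gin(I)$ is weakly stable and shares the Hilbert function of $I$, its lex-ideal is $I\lex$; Proposition~\ref{weakrigidity} applied to $\Gin(I)$ at level $i$ yields $h^k(R/\Gin(I))_j = h^k(R/I\lex)_j$ for all $k \geq i$ and all $j$. Combined with the universal inequality $h^k(R/I)_j \leq h^k(R/\Gin(I))_j$, the conclusion becomes equivalent to $h^k(R/I)_j = h^k(R/\Gin(I))_j$ for all $k \geq i$ and $j$, which by Theorem~\ref{partiallySCM} is exactly the statement that $R/I$ is $i$-sCM. I will prove this by induction on $i$.

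\emph{Induction on $i$.} For $i = 0$, the identity $\Gin(I\sat) = \Gin(I)\sat$ gives $h^0(R/I)_j = h^0(R/\Gin(I))_j$ unconditionally; together with the hypothesis this is condition~(2) of Theorem~\ref{rigidit}, so all four equivalent conditions hold and Remark~\ref{note}(c) yields that $R/I$ is sCM. For $i \geq 1$, by Lemma~\ref{regular element}.3 we may replace $I$ with $I\sat$ to assume $\depth R/I > 0$, and pick a generic $R/I$-regular linear form $l$; by Lemma~\ref{regular element}.2, $R/I$ is $i$-sCM if and only if $R/(I+(l)) \cong R_{[n-1]}/g_n(I)$ (via \eqref{generic}) is $(i-1)$-sCM. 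Since $\Gin(g_n(I)) = \Gin(I)_{[n-1]}$ by \eqref{215} and $(g_n(I))\lex = (\Gin(I)_{[n-1]})\lex$ (both share the Hilbert function of $\Gin(I)_{[n-1]}$), the inductive hypothesis applied to $g_n(I)$ in $R_{[n-1]}$ at level $i-1$ reduces matters to verifying
\[
h^{i-1}\bigl(R_{[n-1]}/\Gin(I)_{[n-1]}\bigr)_j = h^{i-1}\bigl(R_{[n-1]}/(\Gin(I)_{[n-1]})\lex\bigr)_j \quad \text{for all } j.
\]

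\emph{Main obstacle.} The principal obstacle is this last verification. From the initial reduction and Corollary~\ref{caracterisescion}(i)$\Rightarrow$(iv) applied to $\Gin(I)$ at level $i$, the ideal $J = (\Gin(I)_{[n-i+1]} : X_{n-i+1}^\infty)_{[n-i]}$ is critical. For $i \geq 2$, one checks directly that $(\Gin(I)_{[n-1]})_{[n-i+1]} = \Gin(I)_{[n-i+1]}$, so the same $J$ is attached to $\Gin(I)_{[n-1]}$ in $R_{[n-1]}$ at level $i-1$, and Corollary~\ref{caracterisescion}(iv)$\Rightarrow$(i) applied in $R_{[n-1]}$ delivers the displayed equality. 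For the edge case $i = 1$, one has $J = \Gin(I)_{[n-1]}$, and Corollary~\ref{caracterisescion}(v) combined with the Borel-fixedness of $\Gin(I)_{[n-1]}$ in $R_{[n-1]}$ identifies $\Gin(I)_{[n-1]}$ with $((I\lex)\sat)_{[n-1]}$, which is a lex-ideal of $R_{[n-1]}$ (the restriction of the saturated lex-ideal $(I\lex)\sat$); hence $(\Gin(I)_{[n-1]})\lex = \Gin(I)_{[n-1]}$ and the equality is trivial. This closes the induction.
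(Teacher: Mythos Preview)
Your proof is correct and follows essentially the same route as the paper's: reduce via Proposition~\ref{weakrigidity} and Theorem~\ref{partiallySCM} to showing that $R/I$ is $i$-sCM, then descend by saturating and cutting with a generic hyperplane, using Corollary~\ref{caracterisescion} to control criticality. The paper performs the descent $i$ times explicitly, obtaining $\Gin(J_i)=((\Gin(I)_{[n-i+1]})\sat)_{[n-i]}$ and applying Corollary~\ref{caracterisescion} once at the bottom; you package the same descent as an induction on $i$, applying the corollary in both directions at each step. This is only a cosmetic difference.

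One point, however, is glossed over. After you replace $I$ by $I\sat$ you invoke ``Corollary~\ref{caracterisescion}(i)$\Rightarrow$(iv) applied to $\Gin(I)$ at level $i$'', but the hypothesis you actually possess is $h^i(R/\Gin(I_0))_j=h^i(R/I_0\lex)_j$ for the \emph{original} ideal $I_0$, not $h^i(R/\Gin(I_0\sat))_j=h^i(R/(I_0\sat)\lex)_j$; and $(I_0\sat)\lex\neq I_0\lex$ in general. Equivalently, you need that the ideal $J$ attached to $\Gin(I_0\sat)$ (namely $((\Gin(I_0):X_n^\infty)_{[n-i+1]}:X_{n-i+1}^\infty)_{[n-i]}$) coincides with the one attached to $\Gin(I_0)$ (namely $(\Gin(I_0)_{[n-i+1]}:X_{n-i+1}^\infty)_{[n-i]}$). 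This is true, but it is precisely the weakly-stable identity of \cite[Lemma~1.4]{CaSb}, which the paper invokes explicitly when simplifying its iterated formula for $\Gin(J_i)$; your phrase ``one checks directly that $(\Gin(I)_{[n-1]})_{[n-i+1]}=\Gin(I)_{[n-i+1]}$'' is the trivial part and does not by itself bridge the saturation. With that lemma inserted, your argument is complete.
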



\begin{proof}
The case $i=0$ is yielded by Remark \ref{note} (d); we thus assume that $i>0$.
Since $\Gin(I)$ is a weakly stable ideal, by Proposition \ref{weakrigidity} we have that $h^k(R/\Gin(I))_j=h^k(R/I\lex)_j$ for all $k \geq i$ and all $j$; hence, by Theorem \ref{partiallySCM}, it is enough to prove that $R/I$ is $i$-sCM.

Since $R/I\sat$ has positive depth, we know that there exists a generic linear form $l_n$ which  is $R/I\sat$-regular. Now, by \eqref{generic}, $R_{[n-1]}/g_n(I\sat)\simeq R/(I\sat + (l_n))$; thus, if we let $J_1:=g_n(I\sat)$, by \eqref{215} we have that $\Gin(J_1)=(\Gin(I)\sat)_{[n-1]}$ and $R/I$ is $i$-sCM if and only if $R_{[n-1]}/J_1$ is $(i-1)$-sCM. 
If $i-1 > 0$ we may continue in this way: we saturate $J_1$ with respect to the maximal ideal of $R_{[n-1]}$, we take a generic linear form $l_{n-1}\in R_{[n-1]}$ which is $R_{[n-1]}/J_1\sat$-regular and apply \eqref{generic}.  By letting $J_2$ be the ideal $g_{n-1}(J_1\sat)$, we have  $R_{[n-2]}/J_2\simeq R_{[n-1]}/(J_1\sat + (l_{n-1}))$ and $$\Gin(J_2)=(\Gin(J_1)\sat)_{[n-2]}=(((\Gin(I)\sat)_{[n-1]})\sat)_{[n-2]}=((\Gin(I)_{[n-1]})\sat)_{[n-2]},$$ where the last equality holds by \cite[Lemma 1.4]{CaSb}. After $i$ steps, we shall have 
\begin{equation}\label{gingin}
\Gin(J_i)=((\Gin(I)_{[n-i+1]})\sat)_{[n-i]}
\end{equation}
and $R/I$ is $i$-sCM if and only if  $R_{[n-i]}/J_i$ is $0$-sCM. Since $\Gin(I)$ is a weakly stable ideal and $h^i(R/\Gin(I))=h^i(R/I\lex)$, by Corollary \ref{caracterisescion} we have that $((\Gin(I)_{[n-i+1]})\sat)_{[n-i]}$ is a critical ideal. By \eqref{gingin} also $J_i$ is a critical ideal. Remark \ref{note} (e) thus implies that $R_{[n-i]}/J_i$ is sCM, as desired.
\end{proof}


\begin{corollary}\label{lbutnol}
Let $I$ be a homogeneous ideal and $i$ be a positive integer.

\noindent
 Set $J=$ $(\Gin(I)_{[n-i+1]}:X_{n-i+1}^{\infty})_{[n-i]}$ and $J'=((I\lex)_{[n-i+1]}:X_{n-i+1}^{\infty})_{[n-i]}$. TFAE: 

\medskip

{\bf (i)} $h^i(R/\Gin(I))_j=h^i(R/I\lex)_j$ for all $j$; 

\smallskip

{\bf (ii)} $h^i(R/I)_j=h^i(R/I\lex)_j$ for all $j$; 

\smallskip

{\bf (iii)} $h^k(R/I)_j=h^k(R/I\lex)_j$ for all $k\geq i$ for all $j$;

\smallskip

{\bf (iv)} $J$ and $J'$ have the same Hilbert function; 

\smallskip

{\bf (v)} $J$ is a critical ideal; 

\smallskip

{\bf (vi)} $\Gin(J)=J'$.
\end{corollary}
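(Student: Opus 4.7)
The statement packages together six conditions that split naturally into two blocks: conditions (i), (ii), (iii) compare the local cohomology of $I$, $\Gin(I)$, and $I\lex$, while (iv), (v), (vi) are intrinsic properties of the auxiliary ideals $J$ and $J'$ built from $\Gin(I)$ and $I\lex$. My plan is to close each block separately and then bridge them through a single application of Corollary~\ref{caracterisescion} to the weakly stable ideal $\Gin(I)$.

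For the first block, the chain (i) $\Rightarrow$ (iii) $\Rightarrow$ (ii) $\Rightarrow$ (i) is essentially already in hand. The implication (i) $\Rightarrow$ (iii) is Theorem~\ref{yeah^3} verbatim. The implication (iii) $\Rightarrow$ (ii) is trivial by specializing $k = i$. Finally, (ii) $\Rightarrow$ (i) follows by sandwiching: the inequalities in \eqref{pop} give
\[
h^i(R/I)_j \leq h^i(R/\Gin(I))_j \leq h^i(R/I\lex)_j
\]
for every $j$, so equality of the two ends forces equality in the middle.

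The second block is obtained by applying Corollary~\ref{caracterisescion} to the weakly stable ideal $\Gin(I)$ (recall that generic initial ideals are weakly stable in any characteristic). The key observation that makes this legitimate is that $\Gin(I)$ and $I$ have the same Hilbert function, so they have the same lex-ideal: $\Gin(I)\lex = I\lex$. Consequently the ideals $J$ and $J'$ in our corollary coincide with the ideals obtained by applying the construction of Corollary~\ref{caracterisescion} to $\Gin(I)$ in place of $I$. Reading off the equivalent conditions there, (i) of Corollary~\ref{caracterisescion} becomes precisely our (i), while (iii), (iv), (v) of Corollary~\ref{caracterisescion} translate into our (iv), (v), (vi). Thus (i) $\Leftrightarrow$ (iv) $\Leftrightarrow$ (v) $\Leftrightarrow$ (vi) is immediate. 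Combining the two blocks yields the full circle of equivalences.

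There is no genuine obstacle: once one notices that $\Gin(I)\lex = I\lex$ and that $\Gin(I)$ is weakly stable, the corollary is essentially a bookkeeping exercise that re-packages Theorem~\ref{yeah^3} together with the weakly stable case already treated in Corollary~\ref{caracterisescion}. The only care needed is in matching definitions: verifying that the ideal $J$ defined here from $\Gin(I)$ is exactly the ideal that plays the role of $J$ in Corollary~\ref{caracterisescion} when its hypothesis ``$I$ weakly stable'' is applied to $\Gin(I)$, and likewise for $J'$. This is a direct substitution using the equality of lex-ideals noted above.
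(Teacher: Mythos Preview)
Your proof is correct and matches the paper's intent: the paper states the corollary without proof, treating it as an immediate consequence of Theorem~\ref{yeah^3}, Corollary~\ref{caracterisescion} applied to the weakly stable ideal $\Gin(I)$, and the sandwich inequality~\eqref{pop}, which is exactly the argument you have written out.
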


\begin{remark}
{\bf (a)} Theorem \ref{yeah^3} implies that if $h^i(R/\Gin(I))_j=h^i(R/I\lex)_j$ for all $j$, then $h^i(R/I)_j=h^i(R/I\lex)_j$ for all $j$; this is not true for the Betti numbers, see \cite[Thm. 3.1]{MuHiExample}. \\ 
{\bf (b)} As in Remark \ref{remBW}, it is straightforward that $I$ and $I\lex$ have the same $i$th truncated BW-polynomial if and only if $h^i(R/\Gin(I))_j=h^i(R/I\lex)_j$ for all $j$. \\
{\bf (c)} In \cite{CaSb} the notion of zero-generic initial ideal has been introduced. The zero-generic initial ideal $\Gin_0(I)$  shares with the usual one $\Gin(I)$ many of its most interesting properties and the two notions coincide in characteristic $0$. We observe that all the equivalent conditions of the previous result are still valid for $\Gin_0(I)$. In fact, since $h^i(R/\Gin_0(I))=h^i(R/\Gin(I))$ for all $i$, Theorem \ref{yeah^3} clearly holds for $\Gin_0(I)$, and, since $\Gin_0(I)$ is weakly stable, the conclusions of Corollary \ref{caracterisescion} as well. Thus, we are left to show that $\Gin_0(J)=J'$ is equivalent to conditions (i)-(v). One direction is immediately seen, since if $\Gin_0(J)=J'$, they have the same Hilbert function; the ideal $J'$ is a universal lex-ideal and, thus, $J$ is critical. Conversely, if $J$ is critical, so is $\Gin_0(J)$. Therefore, $\Gin(\Gin_0(J))=\Gin_0(J)\lex$. Since $\Gin_0(J)$ is Borel-fixed, the ideal on the left is $\Gin_0(J)$, whereas the ideal on the right is $J'=J\lex$, because $\Gin_0(J)$ and $J$ have the same Hilbert function.
\end{remark}

\noindent
{\bf Acknowledgments}. The authors would like to thank A. Conca and A. Goodarzi for some useful discussions about the topics of this paper.
%

\end{document}